\def\1{^{-1}}
\def\id{{\sf id}}
\newtheorem{De}{Definition}
\newtheorem{Th}[De]{Theorem}
\newtheorem{Pro}[De]{Proposition}
\newtheorem{Le}[De]{Lemma}
\newtheorem{Co}[De]{Corollary}
\newtheorem{Rem}[De]{Remark}
\newtheorem*{Ex*}{Examples}
\newtheorem*{Example*}{Example}
\def\K{{\mathtt K}}
\def\G{{\mathtt G}}
\def\cat{{\sf Cat}}
\def\Der{{\sf Der}}
\def\ca{{\mathcal C }}
\def\H{{\mathtt H}}
\def\nc{\mathop{\bf Z}_*^{Nor}\nolimits}
\def\xto#1{\xrightarrow[]{#1}}
\def\d{\partial}
\def\1{^{-1}}
\def\im{{\sf Im}}
\def\ker{{\sf Ker}}
\def\cok{\sf Coker}
\def\al{{\alpha}}
\def\dd{{\delta}}
\def\t{\otimes}
\begin{document}

	\title{On the Gottlieb group, Drinfeld centre and the centre of a crossed module}
	\author{Mariam Pirashvili}
		
	\maketitle
	
\begin{abstract}
	The aim of this paper is to introduce the concept of the centre of a crossed module \(\G_* = (\G_2\to \G_1)\). This centre is closely related to the Gottlieb group of the classifying space of a crossed module and also to the Drinfeld centre of a monoidal category introduced independently by Drinfeld and Joyal and Street. Our definition of the centre is based on certain crossed homomorphisms $\G_1\to \G_2$, which makes it easy to relate it to group cohomology. 
\end{abstract}

\section{Introduction}

The aim of this work is to introduce and prove essential properties of the centre of a crossed module. In particular, our results establish a connection between the Drinfeld-Joyal-Street centre of a monoidal category \cite{jst} and the Gottlieb group \cite{gottlieb1} of a pointed topological space. To the best of our knowledge, such a link between these classical objects has not been observed before.

For a CW-complex $X$ we define the \emph{centre} of $X$ to be the connected component of the mapping space $Map(X,X)$ containing the identity map $\id_X$. Here $Map(X,X)$ is the set of all continuous maps $X\to X$, equipped with the compact-open topology. The centre of $X$ will be denoted by ${\mathcal Z}X$. It  has  an $H$-space structure induced by the composition of maps. In particular, the group $\pi_1({\mathcal Z}X,\id_X)$ is abelian. These spaces are classical objects, although not known by this name. They have been extensively studied, see for example the survey paper \cite{survey} and the references given therein. We are interested in these spaces in the following context in which they arose in Gottlieb's work \cite{gottlieb1}.

Let $(X,x_0)$ be a pointed space. Then the evaluation at $x_0$ defines the pointed map $ev_{x_0}:({\mathcal Z}X,\id_X)\to  (X,x_0),$ which induces a group homomorphism $$\pi_1({\mathcal Z}X,\id_X)\to \pi_1(X,x_0).$$
The image of this homomorphism is denoted by $G(X,x_0)$ and is called the \emph{Gottlieb group} of $(X,x_0)$.  
Gottlieb, among other results, proved that if $X=B\pi$ is the classifying space of a discrete group $\pi$, then there is a homotopy equivalence
$$ {\mathcal Z}(B\pi )\simeq B({\mathcal Z}\pi)
$$ 
and an isomorphism of groups $$G(B\pi,1)\cong {\mathcal Z}\pi,$$
 where ${\mathcal Z}\pi$ is the centre of the group $\pi$, see \cite[Theorem III.2]{gottlieb1} and \cite[Corollary 1.13 ]{gottlieb1}.

Our aim is to extend these results to spaces $X$ for which $\pi_i(X)=0$ for $i\not = 1,2$. It is well-known that algebraic models for such spaces are crossed modules, see for example \cite{jll}, \cite{4dim},  \cite{nat}.  
In fact, any crossed module ${\G}_*=(\G_2\to \G_1)$ has a classifying space $B{\G}_*$, which is connected and has vanishing homotopy groups in dimensions $\geq 3$. Conversely, any CW-complex of such type is homotopy equivalent to $B{\G}_*$ for some crossed module ${\G}_*$. Moreover, one can assume that $\G_1$ is a free group.

By definition, a crossed module $\G_*$ is a group homomorphism $\d:\G_2\to \G_1$ together with an action of $\G_1$ on $\G_2$ satisfying some properties (see Section \ref{13se}). The most important invariants of the crossed module $\G_*$ are the group $\pi_1(\G_*)=\cok(\d)$ and the $\pi_1(\G_*)$-module $\pi_2(\G_*)=\ker(\d)$.

One of the main results of this paper is to show that any crossed module $\d:\G_2\to \G_1$  fits in a commutative diagram
$$\xymatrix{\G_2\ar[d]_{id} \ar[r]^{\dd}& {\bf Z}_1(\G_*)\ar[d]^{{\sf z}_1}\\
	\G_2 \ar[r]^{\d}& \G_1
}
$$
where the top horizontal $\G_2\xto{\dd} {\bf Z}_1(\G_*)$ and right vertical $ {\bf Z}_1(\G_*) \xto {{\sf z}_1} \G_1$ arrows have again crossed module structures. In fact the former is even a braided crossed module, which we call the \emph{centre of the crossed module} $\d:\G_2\to \G_1$ and denote it by ${\mathcal Z}_*(\G_*)$. 
We denote the crossed module corresponding to the right vertical arrow by \(\G_*//{\mathcal Z}_*(\G_*)\). Conceptually, we want to think of the latter as a 2-mathematical analogue of the quotient of the crossed module by its centre, see Subsection \ref{2-maths}.

The centre of  a crossed module is closely related to the  Drinfeld-Joyal-Street centre of a monoidal category \cite{jst}. Namely, it is well-known that any (braided) crossed module defines a (braided) monoidal category \cite{js}. It turns out that the braided monoidal category corresponding to the braided crossed module ${\mathcal Z_*}(\G_*)$ \cite{js} is isomorphic to the centre of the monoidal category corresponding to $\G_*$, see Proposition \ref{9}.

Our definition of ${\mathcal Z}_*(\G_*)$ is based on certain crossed homomorphisms $\G_1\to \G_2$ and has some advantage compared to one based on monoidal categories. Namely, the description of ${\mathcal Z_*}(\G_*)$ in terms of crossed homomorphisms makes it easy to relate the centre of a crossed module to group cohomology. In fact, there are two interesting connections. Firstly, as we will show, $\pi_1({\mathcal Z}_*(\G_*))$ is a subgroup of 
$\H^1(\G_1,\G_*)$, the cohomology of $\G_1$ with coefficients in the crossed module $\G_*$, as defined in  \cite{dg}. Secondly, the essential invariants of ${\mathcal Z}(\G_*)$ are closely related to the low dimensional group cohomology. In fact, one has an isomorphism of groups
$$ \pi_2({\mathcal Z}_*(\G_*))\cong \H^0(\pi_1(\G_*), \pi_2(\G_*))$$
and the group $\pi_1({\mathcal Z}_*(\G_*))$ fits in an exact sequence

\begin{equation}\label{zm} 0\to \H^1(\pi_1, \pi_2)\to \pi_1({\mathcal Z}_*(\G_*))\to  {\sf Z}_{\pi_2(\G_*)}(\pi_1(\G_*)) \xto{g} \H^2(\pi_1, \pi_2),\end{equation}
where
$\pi_1$ and $\pi_2$ denote the groups $\pi_1(\G_*)$ and $\pi_2(\G_*)$, see Lemma \ref{exseq} and part iv) of Proposition \ref{15pr} and \({\sf Z}_{\pi_2(\G_*)}(\pi_1(\G_*))\) is the subgroup of the centre of $\pi_1(\G_*)$ consisting of those elements which act trivially on $\pi_2(\G_*)$.

The main applications of our centre are the following results in homotopy theory (see Proposition \ref{709.pr2} and Theorem \ref{Got=Wh}). Let $\G_*$ be a crossed module  with free $\G_1$. Then there is a homotopy equivalence \[{\mathcal Z}(B\G_*)\simeq B({\mathcal Z}_*(\G_*)).\]
We then give an explicit description of \(G(B\G_*,1)\) as a subgroup of \({\sf Z}_{\pi_2(\G_*)}(\pi_1(\G_*))\), by identifying it with the kernel of the homomorphism \(g\) in the above exact sequence, i.e. we show exactness of the sequence
\[0\to G(X,x_0) \to  {\sf Z}_{\pi_2(\G_*)}(\pi_1(\G_*)) \xto{g} \H^2(\pi_1(\G_*), \pi_2(\G_*)).\]


It should be pointed out that in the 80's Norrie also introduced the notion of a centre of a crossed module \cite{norie}, but our notion differs from hers. We show that there exists a comparison morphism from Norrie's centre to ours, which induces an isomorphism on $\pi_2$ but not on $\pi_1$. The advantage of our definition is the exact sequence (\ref{zm}) which shows that $\pi_1$ of our centre has a nice relation to group cohomology.

For any group $G$, one has the crossed module $\d:G\to Aut(G)$, where $\d(g)$ is the inner automorphism corresponding to $g\in G$. This crossed modules is denoted by ${\sf AUT}(G)$. For $G=D_4$ we compute both Norrie's and our centre. Norrie's centre is ${\sf C_2}\to \{1\}$, while our centre is more complicated and our computations show that $\pi_1({\mathcal Z}_*({\sf AUT}(D_4)))\cong {\sf C_2} \times {\sf C_2}$ and hence they have nonisomorphic  $\pi_1$, see Section \ref{ex45}.

The paper is organised as follows. After the preliminaries in Section \ref{prelim_centre} we introduce and prove the main properties of the centre of a crossed module in Section \ref{4sec}.
The connection to the Gottlieb group is explored in Section \ref{topology}.

As was demonstrated by Loday \cite{jll} there is a generalisation of crossed modules, known as ${\sf cat}^n$-groups, which serve as algebraic models for connected spaces $X$ for which $\pi_i(X)=0$ for all $i>n$. We believe that there exists an extension of our centre to ${\sf cat}^n$-groups and they will have a connection to the centre of such $X$.

\section{Preliminaries on crossed modules and  monoidal categories}\label{prelim_centre}
The material of this section is well known. We included it in order to fix terminology and notations.

\subsection{Crossed modules}\label{13se} Recall that a \emph{crossed module} $\G_*$ is a group homomorphism $\d:\G_{2}\to \G_1$ together with another group homomorphism  $\rho: \G_1\to Aut(\G_{2})$ such that
\begin{align} \d(^xa)&=x\d(a)x\1,  \  \label{CM1}\\
^{\d(b)}a&=bab\1,  \  \label{CM2}
\end{align}
where $a,b\in \G_{2}$ and $x\in {\bf G}_1$. Here and elsewhere we write $^xa$ instead of $\rho(x)(a)$.

We refer to \cite{nat} for an extensive study of crossed modules and their role in homotopy theory. Additionally, we recommend the recent article \cite{Hueb} for  the history of crossed modules and applications.

It follows that ${\sf Im}(\d)$ is a normal subgroup of $\G_1$ and thus 
$$\pi_1(\G_*)=\G_1/\im(\d)$$ is a group.
Moreover, $$\pi_2(\G_*)=\ker(\d)$$ is a central subgroup of $\G_2$ and the action of $\G_1$ on $\G_2$ induces a $\pi_1(\G_*)$-module structure on the abelian group $\pi_2(\G_*)$. 
Thus one has an exact sequence of groups
$$1\to \pi_2(\G_*)\to \G_{2} \xto{\d} \G_{1}\to \pi_1(\G_*) \to 1.$$

A \emph{morphism of crossed modules} $\al_*:\G_*\to \G'_*$ is a pair of group homomorphisms $\al_{2}:\G_{2}\to\G_{2}'$ and $\al_1:\G_{1}\to \G_{1}'$ that preserve the structure of crossed modules, ensuring compatibility with the boundary maps and actions. Crossed modules and their morphisms form a category, denoted by ${\bf Xmod}$.

Recall also that if $\G_*$ is a crossed module, then it gives rise to a monoidal groupoid $\cat(\G_*)$. Objects of $\cat(\G_*)$ are elements of $\G_1$. If $x,y\in \G_1$ then a morphism from $x\to y$ is given by a diagram $x\xto{a} y$, where $a\in \G_2$ satisfies $y=\d(a) x$. The composition of morphisms is defined by
$$\xymatrix{& \d (a) x\ar[dr]^b &\\ x\ar[ur]^a\ar[rr]_{ba}&&\d(b) \d(a) x\,.}$$

One easily checks that in this way one obtains a groupoid, where the identity morphism of an object $x$ is given by $x\xto{1}x$. In fact, $\cat(\G_*)$ has a canonical monoidal category structure. The monoidal structure is denoted by $\cdot$ and the corresponding bifunctor $$\cdot: \cat(\G_*) \times  \cat(\G_*) \to \cat(\G_*)$$ is given on objects by $(x,y)\mapsto x\cdot y=xy$, and on morphisms it is given by
$$(x\xto{a} x')\cdot (y\xto {b} y')= (xy\xto{a\, ^xb} x'y').$$ 
This is well-defined, because
$$\d(a\, ^xb)xy=\d (a)x \d(b)x\1 xy=\d (a) x \d (b)y=x'y'.$$ 

In particular, if $x\xto{a} x'$ is a morphism, and we act by the functor $(-)\cdot y$, we obtain the morphism $xy\xto{a} x'y$, while if we act by the functor $x\cdot (-)$ on a morphism $y\xto{b} y'$, we obtain the morphism $xy \xto{^xb} xy'$. In other words, $a\cdot 1_y=a$ and $1_x \cdot b=\, ^xb$.

We always consider the groupoid $\cat(\G_*)$ as pointed, where the chosen object is $1\in \G_1$. In particular, we will write $\pi_1(\cat(\G_*))$ instead of $\pi_1(\cat(\G_*), 1)$. Then comparing the definitions we see that
$\pi_{i+1}(\G_*)=\pi_i(\cat(\G_*))$ for $i=0,1$.
 
\subsection{ Braided crossed modules}  \begin{De}\label{bcm}
	A \emph{braided crossed module} (BCM for short) $\G_*$  is a  group homomorphism $\d:\G_{2}\to \G_1$ together with a map $\{-,-\}:\G_1 \times \G_1 \to \G_{2}$ such that
	\begin{align} \d\{x,y\}&=[x,y],\  \label{BCM1}\\
	\{\d a,\d b\}&=[a,b], \label{BCM2}\\
	\{\d a,x\}&=\{x,\d a\}\1, \label{BCM3}\\ 
	\{x,yz\}&=\{x,y\}\{x,z\}\{zxz\1x\1, y\}, \label{BCM4}\\ 
	\{xy,z\}&=\, \{x,yzy\1\}\{y,z\}. \label{BCM5} 
	\end{align}
	Here as usual $x,y\in \G_1$ and $a,b\in \G_2$. Recall also that $[x,y]=xyx\1y\1$ is the commutator.
	
	A \emph{symmetric crossed module} (SCM for short) is a BCM for which
		$$\{y ,x\}=\{x,y\}\1 $$
holds for all $x,y\in \G_1$. This condition	is much stronger than (\ref{BCM3}).
\end{De}

The definitions of BCM and SCM, except for the terminology, go back to Conduch\'e \cite[2.12]{conduche}. The relation with braided and symmetric monoidal categories was discovered by Joyal and Street \cite{js}.

Recall also the following definition \cite{4dim}.

 \begin{De} \label{rqm}  A \emph{reduced quadratic module} (RQM for short) is a group homomorphism $\G_2\xto{\d} \G_1$ together with a group homomorphism
$$\G_1^{ab} \otimes \G^{ab}_1 \to \G_2,\ \ \bar{x}\otimes \bar{y}\to \{\bar{x},\bar{y}\},$$
where $K^{ab}=K/[K,K]$ and $\bar{x}$ denotes the image of $x\in K$ in $K^{ab}.$ One requires that $\G_2$ and $\G_1$ are nilpotent groups of class two and for all $a,b\in \G_2$ and $x,y\in \G_1$ the following identities hold:
\begin{align*} 
1&=\{\overline{\d(a)},\bar{x}\}\{\bar{x},\overline{\d(a)}\},\\
[a,b]&=\{\overline{\d(a)},\overline{\d(b)}\},\\
[x,y]&=\d\{\bar{x},\bar{y}\}.
\end{align*} 
\end{De}
It is clear that any RQM is a BCM.

A \emph{morphism of BCM} is a pair of group homomorphisms that preserve the structure of BCM, ensuring compatibility with the boundary maps and the braiding operation.

\begin{Le}\label{2le} Let $\G_*$ be a BCM. Define $\rho:\G_1\to Aut(\G_{2})$ by
	$$^xa:=\{x,\d a\}a.
	$$
	Then one obtains a crossed module.
\end{Le}

\begin{proof} This fact is due to Conduch\'e \cite{conduche}. 
\end{proof} 
\begin{Le} Let $\G_*$ be a BCM. Then $\pi_1(\G_*)$ is an abelian group and the induced action of $\pi_1(\G_*)$ on $\pi_2(\G_*)$ is trivial.
\end{Le}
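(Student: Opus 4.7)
The plan is to observe that both claims fall out almost immediately from the defining axioms together with the preceding Lemma \ref{2le}. First I would handle the abelianness of $\pi_0(\G)$. By axiom (\ref{BCM1}), for every $x,y \in \G_0$ we have $[x,y] = \d\{x,y\}$, so every commutator in $\G_0$ lies in $\im(\d)$. Since $\pi_0(\G) = \G_0/\im(\d)$ and the image of every commutator is killed, the quotient group $\pi_0(\G)$ is abelian.

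For the second statement, the action of $\G_0$ on $\G_1$ underlying the crossed module structure is the one produced by Lemma \ref{2le}, namely $^xa = \{x,\d a\}a$. The triviality of the induced action of $\pi_0(\G)$ on $\pi_1(\G)$ is already included in the ``moreover'' clause of Lemma \ref{2le}, so it suffices to invoke that lemma. If one wants to see it directly, take $a \in \pi_1(\G) = \ker(\d)$; then $\d a = 1$, and one checks that $\{x,1\} = 1$ using (\ref{BCM4}) with $y = z = 1$, giving $^xa = a$ for all $x \in \G_0$ and $a \in \pi_1(\G)$. Hence the whole group $\G_0$, and in particular the quotient $\pi_0(\G)$, acts trivially on $\pi_1(\G)$.

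No real obstacle is expected: the BCM axioms (\ref{BCM1}) and (\ref{BCM4}) carry the content, and Lemma \ref{2le} does the rest. The only small care needed is to confirm $\{x,1\}=1$ from the axioms, which is the only nontrivial bookkeeping step in the argument.
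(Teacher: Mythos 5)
Your proof of the first claim is identical to the paper's: (\ref{BCM1}) puts every commutator of $\G_0$ into $\im(\d)$, so the quotient is abelian. For the second claim your overall route is also the paper's (reduce to showing $\{x,1\}=1$ for $a\in\ker(\d)$), and your observation that the statement is already contained in the ``moreover'' clause of Lemma \ref{2le} is legitimate. However, your direct verification has a small but real omission: substituting $y=z=1$ in (\ref{BCM4}) gives $\{x,1\}=\{x,1\}\{x,1\}\{1,1\}$, i.e.\ only $\{x,1\}=\{1,1\}^{-1}$, not $\{x,1\}=1$. You still need $\{1,1\}=1$, which the paper extracts from (\ref{BCM5}) with $y=z=1$ (and which also follows in one line from (\ref{BCM2}) with $a=b=1$). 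So add that one step; otherwise the argument is complete and matches the paper's.
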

\begin{proof} By (\ref{BCM1}) we have $[\G_1,\G_1]\in \im(\d)$. Hence $\pi_1(\G_*)=\G_1/\im(\d)$ is abelian.
	For the second part, observe that $a\in \pi_2(\G_*)$ iff $\d(a)=1$. Hence $^xa=\{x,1\}a$. Thus we only need to show that $\{x,1\}=1$. By taking $y=z=1$ in (\ref{BCM5}) one obtains $\{1,1\}=1$. Next, we put $y=z=1$ in  (\ref{BCM4}) to obtain $\{x,1\}=1$.  
\end{proof}
The following important result is due to Joyal and Street \cite{js}.

\begin{Le}\label{4js} Let $\G_*$ be a crossed module. Then there is a one-to-one correspondence between the bracket operations $\{-,-\}:\G_1\times  \G_1\to \G_2$ satisfying the conditions listed in Definition \ref{bcm}  and the braided monoidal category structures on ${\sf Cat}(\G_*)$. Under this equivalence, the element $\{x,y\}$ corresponds to the braid $yx\xto{\{x,y\}} xy$.
\end{Le}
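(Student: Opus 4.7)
My plan is to establish the bijection by direct translation. The key observation is that a morphism $u\to v$ in $\cat(\G_*)$ is by definition an element $a\in \G_1$ with $\d(a)u=v$. Hence every $\xi\in \G_1$ with $\d(\xi)=[x,y]=xy(yx)\1$ represents a morphism $yx\to xy$, and conversely. This makes condition (\ref{BCM1}) precisely the formal requirement for $\{x,y\}$ to define a morphism $c_{y,x}:yx\to xy$ in $\cat(\G_*)$, which we take to be the braiding. So I will send a bracket $\{-,-\}$ to the family $c_{y,x}:=\bigl(yx\xto{\{x,y\}} xy\bigr)$, and conversely send a braiding $c$ to the bracket $\{x,y\}:=$ the label in $\G_1$ of the morphism $c_{y,x}$.

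First I will verify naturality of $c$. For morphisms $a:x\to x'$ and $b:y\to y'$, the naturality square for $c_{-,-}$ unpacks, via the formula $(x\xto{a}x')\cdot(y\xto{b}y')=(xy\xto{a\,{}^xb}x'y')$, to an identity of elements of $\G_1$. Specialising to $a=1_x$ and to $b=1_y$ and using the action formula ${}^xa=\{x,\d a\}a$ from Lemma \ref{2le} together with (\ref{BCM1}) reduces naturality to (\ref{BCM3}); the general case then follows by composing the two specialisations. Condition (\ref{BCM2}) likewise falls out by applying naturality to morphisms whose source is the identity object and invoking the Peiffer identity (\ref{CM2}) for the crossed module.

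Next I will handle the hexagons, which are the heart of the correspondence. Expanding $c_{yz,x}:x(yz)\to (yz)x$ as the composite $(y\cdot c_{z,x})\circ(c_{y,x}\cdot z)$ and reading off labels in $\G_1$ gives on one side $\{x,yz\}$, and on the other side an expression in $\{x,y\}$, $\{x,z\}$ and the action formula; the resulting identity is exactly (\ref{BCM4}). The mirror hexagon, expanding $c_{z,xy}$, yields (\ref{BCM5}). The backward direction runs the same translations in reverse: starting from a braided structure on $\cat(\G_*)$ one reads off the bracket and recovers each of (\ref{BCM1})--(\ref{BCM5}) and the action formula from the corresponding categorical axiom. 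The two assignments are inverse to each other by construction.

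The main obstacle is purely combinatorial bookkeeping: every identity involves a non-abelian computation inside $\G_1$ interlaced with conjugations by elements of $\G_0$, and one must carefully track which instance of ${}^xa=\{x,\d a\}a$ is being used in each step of each hexagon. Once the dictionary between labels of composites in $\cat(\G_*)$ and bracket expressions in $\G_1$ is set up cleanly, the verifications become line-by-line translations; because the argument is already due to Joyal and Street \cite{js}, it is reasonable to defer the detailed calculation to that reference.
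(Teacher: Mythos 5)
The paper gives no proof of this lemma at all---it simply attributes the result to Joyal and Street \cite{js}---so your sketch of the dictionary (well-definedness of the braid $yx\to xy$ $\leftrightarrow$ (\ref{BCM1}), hexagons $\leftrightarrow$ (\ref{BCM4}) and (\ref{BCM5}), naturality and the unit $\leftrightarrow$ (\ref{BCM2}), (\ref{BCM3}) and the action formula) is strictly more than the paper offers and is the standard, correct correspondence. The only loose point is attributing naturality to (\ref{BCM3}) alone: naturality in each variable actually encodes the identities $\{x,\d(a)y\}={}^{x}a\,\{x,y\}\,a^{-1}$ and $\{\d(b)x,y\}=b\,\{x,y\}\,({}^{y}b)^{-1}$, which use (\ref{BCM4}), (\ref{BCM5}) and the Peiffer identity jointly rather than (\ref{BCM3}) in isolation, but since you, like the paper, ultimately defer this bookkeeping to \cite{js}, the argument stands.
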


\subsection{The centre of a monoidal category}\label{21s} Let $(\ca, \otimes)$ be a monoidal category. Recall that \cite{jst} the \emph{centre} of  $(\ca, \otimes)$ is the braided monoidal category ${\mathcal Z}(\ca,\otimes)$ which is defined as follows. Objects of ${\mathcal Z}(\ca,\otimes)$ are pairs $(x,\xi)$ where $x$ is an object of $\ca$ and $\xi: (-)\t x\to x \t (-)$ is a natural isomorphism of functors for which the diagram 
$$\xymatrix{ y\t z\t x \ar[dr]_{1_y\t \xi_z }\ar[rr]^{\xi_{y \t z}}&& x\t y \t z
	\\ &y\t x\t z \ar[ur]_{\xi_y\t 1_z}
}$$
commutes for all $y,z\in {\mathcal Ob}(\ca)$. Here and in what follows we write $\xi_y:y\t x\to x \t y$ for the value of $\xi$ on  $y\in {\mathcal Ob}(\ca)$. A morphism $(x,\xi)\to (y,\eta)$ in ${\mathcal Z}(\ca,\otimes)$ is a morphism $f:x\to y$ of $\ca$ such that the diagram
$$\xymatrix{z\t x\ar[d]_{1_z\t f}\ar[r] ^{\xi_z} & x\t z \ar[d]^{f\t 1_z}\\ z\t y \ar[r]_{\eta_z}& y\t z
}$$
commutes for all $z\in{\mathcal Ob}(\ca)$. The monoid structure on ${\mathcal Z}(\ca,\otimes)$ is given by
$$(x,\xi)\t (y\t \eta):=(x\t y, \zeta),$$
where $\zeta_z:z \t x \t y \to x \t y \t z$ is the composite map
$$z \t x \t y \xto{\xi_z\t 1_y} x \t z \t y \xto{1_x \t \eta_ z} x \t y \t z .$$
Finally, the braiding $$c:(x,\xi)\t (y,\eta)\to (y,\eta)\t (x,\xi)$$  
is given by the morphism $\eta_x:x\t y\to y\t x$.

The assignment $(x,\xi)\mapsto x$ obviously extends as a strict monoidal functor ${\mathcal Z}(\ca,\otimes)\to \ca$, which is denoted by ${\sf z}_\ca$.

\section{The centre of a crossed module}\label{4sec}

Let $\G_*$ be a crossed module. In this section we will construct a braided crossed module ${\mathcal Z}_*(\G_*)$ such that the braided monoidal categories $\cat({\mathcal Z}_*(\G_*))$ and ${\mathcal Z}(\cat(\G_*))$ are isomorphic. Because of this fact we will call the crossed module ${\mathcal Z}_*(\G_*)$ the \emph{centre} of $\G_*$. It should be pointed out that ${\mathcal Z}_*(\G_*)$ differs from the centre of $\G_*$ in the sense of \cite{norie}, though there is some relation, which will be discussed in Section \ref{norr}.

The crucial step in the definition of the centre of a crossed module is the group ${\bf Z}_1(\G_*)$. As a set, it is constructed in Definition \ref{z}, and in Lemma \ref{7lem} we equip it with a group structure. The group $\G_1$ acts on ${\bf Z}_1(\G_*)$ (Lemma \ref{10}) and in this way we obtain a crossed module ${\sf z}_1: {\bf Z}_1(\G_*)\to \G_1$ (see Proposition \ref{croq}). The centre  ${\mathcal Z}_*(\G_*)$ has the form $\G_2\xto{\dd} {\bf Z}_1(\G_*)$. The definition of the (braided) crossed module structure on $ {\mathcal Z}_*(\G_*)$ and checking the axioms is given in Lemma \ref{11lema} and Corollary \ref{14p}. The relation to the centre of a monoidal category is given in Proposition \ref{9}. The section \ref{42se} clarifies relations between $\pi_i({\mathcal Z}_*(\G_*) )$ and the group cohomology discussed in the introduction. The fact that ${\bf Z}_1(\G_*)$ is a subgroup of the first nonabelian group cohomology in the sense of \cite{dg} is established in Section \ref{43se}. The map from Norrie's centre to ours is constructed in Section \ref{norr}. In Section \ref{ex45} we compute the centre of the crossed module $G\to Aut(G)$, when $G=D_4$ is the dihedral group of order $8$. 

\subsection{Definition} 
The following is the main definition of this paper.
 
\begin{De}\label{z}Let $\G_*$ be a crossed module. Denote by ${\bf Z}_1(\G_*)$ the set of all pairs $(x, \xi)$ where $x\in \G_1$ and $\xi:\G_1\to \G_2$ is a map satisfying the following identities 
\begin{align}
\d \xi(t)&=[x,t], \ \  \label{ZE1}\\
\xi (\d a )&= \, ^xa\cdot a\1,  \  \label{ZE2}\\
\xi(st)&= \xi(s)\, ^s\xi(t).  \  \label{ZE3}
\end{align}
Here $s,t\in \G_1$ and $a\in \G_2$.
\end{De}

Observe that the last condition says that $\G_1\xto{\xi} \G_2$ is a crossed homomorphism. 

In this section we construct two crossed modules involving ${\bf Z}_1(\G_*)$. The first one is $ {\bf Z}_1(\G_*) \to \G_1$ (see Proposition  \ref{croq} below) and the second one is $\G_2\to {\bf Z}_1(\G_*)$ (see Corollary \ref{14p}). The latter is called the \emph{centre} of $\G_*$ and is in fact a BCM.

We start with a group structure on ${\bf Z}_1(\G_*)$.

\begin{Le} \label{7lem}
\begin{enumerate}[leftmargin=*]
\item [i)] If $(x,\xi), (y,\eta)\in {\bf Z}_1(\G_*)$, then $(xy, \, ^x\eta\cdot \xi)\in {\bf Z}_1(\G_*)$, where $^x\eta\cdot \xi$ is a map $\G_1\to \G_2$ given by
	$$t\mapsto  \, ^x\eta (t)\cdot \xi(t).$$
\item [ii)] The set ${\bf Z}_1(\G_*)$ is a group under the operation
$$(x,\xi)\cdot (y,\eta):= (xy,\,^x\eta\cdot \xi).$$
The unit element is $(1,{\bf 1})$, where $1$ is the identity of $\G_1$ and ${\bf 1}:\G_1\to \G_2$ is the constant map with value $1$.
\end{enumerate}

 \end{Le}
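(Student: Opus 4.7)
For part (i) the plan is to verify the three defining identities (\ref{ZE1}), (\ref{ZE2}), (\ref{ZE3}) for the pair $(xy,\,^x\eta\cdot\xi)$. Identity (\ref{ZE1}) is a direct computation: applying (CM1) gives $\d(\,^x\eta(t))=x[y,t]x\1$, so $\d(\,^x\eta(t)\cdot\xi(t))=x[y,t]x\1[x,t]$, which collapses to $[xy,t]$ by an elementary manipulation of commutators. For (\ref{ZE2}) one expands
\[
(\,^x\eta\cdot\xi)(\d a)=\,^x(\,^ya\cdot a\1)\cdot(\,^xa\cdot a\1)=\,^{xy}a\cdot\,^x(a\1)\cdot\,^xa\cdot a\1=\,^{xy}a\cdot a\1,
\]
using only the action axioms and (\ref{ZE2}) for $\xi$ and $\eta$.

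The only subtle step, and the main obstacle, is (\ref{ZE3}). Writing out both $(\,^x\eta\cdot\xi)(st)$ and $(\,^x\eta\cdot\xi)(s)\cdot\,^s((\,^x\eta\cdot\xi)(t))$ using (\ref{ZE3}) for $\xi$ and $\eta$, the asserted equality reduces to the single commutation
\[
\,^{xs}\eta(t)\cdot\xi(s)\;=\;\xi(s)\cdot\,^{sx}\eta(t).
\]
I plan to prove this by writing $xs=[x,s]\cdot sx$, so that $\,^{xs}\eta(t)=\,^{[x,s]}(\,^{sx}\eta(t))$, then replacing $[x,s]$ by $\d\xi(s)$ via (\ref{ZE1}) for $\xi$, and finally invoking (CM2) in the form $\,^{\d\xi(s)}u=\xi(s)\,u\,\xi(s)\1$. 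This is the unique point in the argument where the crossed module axioms interact with the defining identities of ${\bf Z}_0(\G)$ in an essential way; the rest is book-keeping.

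For part (ii), associativity is routine: both $\bigl((x,\xi)(y,\eta)\bigr)(z,\zeta)$ and $(x,\xi)\bigl((y,\eta)(z,\zeta)\bigr)$ expand to $(xyz,\,^{xy}\zeta\cdot\,^x\eta\cdot\xi)$. The pair $(1,{\bf 1})$ trivially satisfies (\ref{ZE1})--(\ref{ZE3}), and on the nose $(1,{\bf 1})(x,\xi)=(x,\xi)=(x,\xi)(1,{\bf 1})$. For inverses I propose
\[
(x,\xi)\1\;=\;\bigl(x\1,\,\eta\bigr),\qquad \eta(t):=\,^{x\1}\bigl(\xi(t)\1\bigr);
\]
the identities $(x,\xi)(x\1,\eta)=(1,{\bf 1})=(x\1,\eta)(x,\xi)$ are then immediate from the definition of the operation. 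What must be checked is that $(x\1,\eta)$ itself lies in ${\bf Z}_0(\G)$. Conditions (\ref{ZE1}) and (\ref{ZE2}) follow from direct rewrites using (CM1), the action axioms, and the corresponding identities for $(x,\xi)$. For (\ref{ZE3}) one computes
\[
\eta(st)=\,^{x\1}\bigl(\xi(st)\1\bigr)=\,^{x\1}\bigl(\,^s\xi(t)\1\cdot\xi(s)\1\bigr),
\]
and rewrites $\eta(s)\cdot\,^s\eta(t)$ by pushing the factor $\,^{sx\1}\xi(t)\1$ through $\,^{x\1}\xi(s)\1$ using the same $[x,s]=\d\xi(s)$ trick from part (i); both sides then collapse to $\,^{x\1}(\xi(st)\1)$, completing the proof.
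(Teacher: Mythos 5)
Your proof of part (i) is correct and follows the paper's argument exactly: the same computations for (\ref{ZE1}) and (\ref{ZE2}), and the same reduction of (\ref{ZE3}) to the commutation $^{xs}\eta(t)\,\xi(s)=\xi(s)\,{}^{sx}\eta(t)$, resolved via $[x,s]=\d\xi(s)$ and the Peiffer identity. Part (ii) is left to the reader in the paper; your explicit inverse $(x\1,\,{}^{x\1}(\xi(\cdot)\1))$, together with the verification that it satisfies (\ref{ZE1})--(\ref{ZE3}), correctly supplies the omitted details.
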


\begin{proof} 
	
\begin{enumerate}[leftmargin=*]
	
\item [i)] Denote by $\zeta$ the function $^x\eta\cdot \xi$. One needs to check three identities: (\ref{ZE1}), (\ref{ZE2}) and (\ref{ZE3}). We have
	\begin{align*} \d \zeta (t)&= \d(^x\eta (t)) 
\d 	 (\xi(t))\\
&= x \d(\eta(t))x\1 \d (\xi(t))\\
&=x[y,t]x\1[x,t]\\
&=xyty\1x\1t\1\\
&=[xy,t]
	\end{align*}
and the condition (\ref{ZE1}) holds.

We also have
\begin{align*} 
\zeta(\d a)&=\, ^x\eta(\d a)\xi(\d a)\\
&= \, ^x(^ya\cdot a\1)(^x a \cdot a\1)\\
&=\, ^{xy}a \cdot ^x(a\1)^x(a) a\1\\
&= \,^{xy}a\cdot a\1. 
\end{align*}
Thus the condition (\ref{ZE2})  also holds.

 Finally, we also have
 \begin{align*} \zeta(st)&=\, ^x(\eta(st))\xi(st) \\
 & =\, ^x\eta(s) ^{xs}\eta(t)\xi(s)^s\xi(t).
  \end{align*}
  On the other hand, we also have
 $$ \zeta(s)^s\zeta(t)=\,^x\eta(s)\xi(s) \, ^{sx}\eta(t)^s\xi(t).
  $$
 To show these expressions are equal, we have to show that $^{xs}\eta(t)\xi(s)=\xi(s) ^{sx}\eta(t)$, or equivalently that
 $\xi(s) ^{sx}\eta(t)\xi(s)\1=\,^{xs}\eta(t)$. To this end, observe that
  \begin{align*} \xi(s) ^{sx}\eta(t)\xi(s)\1&=\,^{\d\xi(s)sx}\eta(t)\\
  &=\,^{[x,s]sx}\eta(t)\\
  &=\, ^{xs}\eta(t)
 \end{align*}
 and the identity (\ref{ZE3}) follows.
 
 The proof of ii) is straightforward and left to the reader.
 
\end{enumerate}
 \end{proof}

It is clear from the definition of the multiplication in ${\bf Z}_1(\G_*)$, that the map ${\sf z}_1:{\bf Z}_1(\G_*)\to \G_1$ given by $${\sf z}_1(x,\xi)=x$$ is a group homomorphism. Our next aim is to show that this is in fact a crossed module.

\begin{Le}\label{10} For any $z\in \G_1$ and any $(x,\xi)\in {\bf Z}_1(\G_*)$ one has
$(^zx, \psi)\in {\bf Z}_1(\G_*)$, where as usual $^zx=zxz\1$ and 
$$\psi(t)=\,^z\xi(^{z\1}t).$$
\end{Le}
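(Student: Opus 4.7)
The plan is to verify directly that the pair $(^zx, \psi)$ satisfies the three identities (\ref{ZE1}), (\ref{ZE2}), (\ref{ZE3}) required by Definition \ref{z}, working term by term and exploiting crossed-module axiom (\ref{CM1}) to push conjugations across $\d$, axiom (\ref{CM2}) when needed, and the hypothesis that $(x,\xi)\in {\bf Z}_0(\G)$.

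For (\ref{ZE1}), I would start from $\d\psi(t)=\d(^z\xi(^{z\1}t))$, apply (\ref{CM1}) to obtain $z\,\d\xi(z\1 t z)\,z\1$, then invoke (\ref{ZE1}) for $\xi$ to rewrite this as $z[x,z\1 t z]z\1$, and finally expand the commutator and cancel $z\1 z$ in the middle to get $[zxz\1,t]=[^zx,t]$.

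For (\ref{ZE2}), I would rewrite $^{z\1}(\d a)=\d(^{z\1}a)$ using (\ref{CM1}), apply (\ref{ZE2}) for $\xi$ to get $\xi(\d(^{z\1}a))=\,^x(^{z\1}a)\cdot(^{z\1}a)\1$, and then apply the $^z(-)$ action to both factors, using functoriality of the action to collapse $^{z\cdot x\cdot z\1}a\cdot a\1=\,^{(^zx)}a\cdot a\1$, which is exactly what is required.

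For (\ref{ZE3}) -- the step that is mildly trickier because of bookkeeping -- I would use that conjugation by $z\1$ is a group endomorphism so $^{z\1}(st)=(^{z\1}s)(^{z\1}t)$, then apply (\ref{ZE3}) for $\xi$ to split $\xi((^{z\1}s)(^{z\1}t))=\xi(^{z\1}s)\cdot\,^{^{z\1}s}\xi(^{z\1}t)$, then act by $z$. The key arithmetic point is that $z(z\1s)=sz$ (not $s$), so the second factor becomes $\,^{sz}\xi(^{z\1}t)$, which coincides with $\,^s\psi(t)=\,^{sz}\xi(^{z\1}t)$; the first factor is $\psi(s)$ by definition. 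This is the only place I would need to watch that the action exponents compose correctly rather than canceling prematurely, and it is where a careless reader may slip. No obstacle beyond this bookkeeping is expected; everything else is a mechanical application of the crossed-module axioms and the three defining identities for $\xi$.
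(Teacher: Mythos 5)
Your proof is correct and takes essentially the same route as the paper's: a direct verification of (\ref{ZE1})--(\ref{ZE3}), using (\ref{CM1}) to push $\d$ past the conjugation, the three defining identities for $\xi$, and the same key bookkeeping point in the third identity that the composed exponent is $z\cdot({}^{z\1}s)=z(z\1 sz)=sz$, so that ${}^{sz}\xi({}^{z\1}t)={}^{s}\psi(t)$. The only blemish is the typographical slip ``$z(z\1 s)=sz$'', where the intended expression is clearly $z\,({}^{z\1}s)$.
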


\begin{proof}  First of all we need to check that the pair $(^zx,\psi)$ satisfies the identities (\ref{ZE1})-(\ref{ZE3}). In fact, we have
$$\d (\psi(t))=\d (^z\xi(^{z\1}t) )=z[x,^{z\1}t]z\1=
zx\,^{z\1}t x\1 {^{z\1}}(t\1)z\1.$$
On the other hand, the last expression is equal to
$$ zxz\1 tzx\1z\1t\1zz\1= (zxz\1)t(zxz\1)\1t\1,$$ which is the same as $[^zx,t]$ and (\ref{ZE1}) follows.

Next, we have
$$\psi(\d a)=\, ^z\xi(^{z\1}\d(a))=\, ^z\xi(\d(^{z\1}a))=\, ^z(^{x}(^{z\1}a)(^{z\1}a^{-1}))=\, ^{zxz\1}a\cdot a\1
$$
and the equality (\ref{ZE2}) follows.

We also have
$$\psi(st)=\,^z\xi(^{z\1}s\cdot  ^{z\1}t)=^z\left( \xi(^{z\1}s)\cdot ^{z\1 sz}\xi(^{z\1}t\right)=\psi(s)\cdot ^{sz}\xi(^{z\1}t)=\psi(s)\cdot ^s\psi(t),
$$ 
proving the equality (\ref{ZE3}). Hence, $(^zx, \psi)\in {\bf Z}_1(\G_*)$.
\end{proof}

\begin{Pro}\label{croq}
 The construction described in Lemma \ref{10} defines an action of the group $\G_1$ on ${\bf Z}_1(\G_*)$. In this way, the map $${\sf z}_1: {\bf Z}_1(\G_*)\to \G_1$$ is a crossed module. 

\end{Pro}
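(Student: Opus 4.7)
The proposition has three substantive things to verify: that the assignment $(z,(x,\xi))\mapsto {}^z(x,\xi)$ is a genuine group action of $\G_0$ on the set ${\bf Z}_0(\G)$, that it acts by group automorphisms (so that ${\sf z}_0$ becomes a pre-crossed module), and that the Peiffer identity (CM2) holds. Lemma \ref{10} has already shown that $({}^zx,\psi)$ lies in ${\bf Z}_0(\G)$, so these three items are what remain.

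The plan is to dispose of the group-action axioms first: ${}^1(x,\xi)=(x,\xi)$ is obvious from $\psi(t)={}^1\xi({}^{1^{-1}}t)=\xi(t)$, and ${}^{z_1}({}^{z_2}(x,\xi))={}^{z_1z_2}(x,\xi)$ reduces, after unwinding, to the identity ${}^{z_1}({}^{z_2}\xi({}^{z_2^{-1}}{}^{z_1^{-1}}t))={}^{z_1z_2}\xi({}^{(z_1z_2)^{-1}}t)$, which is just the action axiom inside $\G_0$ acting on $\G_1$. To show ${}^z$ is a group homomorphism of ${\bf Z}_0(\G)$, I would compare ${}^z((x,\xi)\cdot(y,\eta))$ and ${}^z(x,\xi)\cdot{}^z(y,\eta)$ by expanding both using the products from Lemma \ref{7lem}. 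The first component on each side is ${}^z(xy)={}^zx\cdot{}^zy$; the second components both equal the map $t\mapsto {}^{zx}\eta({}^{z^{-1}}t)\cdot{}^z\xi({}^{z^{-1}}t)$.

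Condition (CM1) is then automatic, since ${\sf z}_0({}^z(x,\xi))={}^zx=z\cdot {\sf z}_0(x,\xi)\cdot z^{-1}$. The real work is (CM2): for $(x,\xi),(y,\eta)\in{\bf Z}_0(\G)$ one must show
\[
{}^{{\sf z}_0(y,\eta)}(x,\xi)=(y,\eta)\cdot(x,\xi)\cdot(y,\eta)^{-1}.
\]
Computing $(y,\eta)^{-1}=(y^{-1},\eta^\vee)$ with $\eta^\vee(t)=({}^{y^{-1}}\eta(t))^{-1}$ from the formula in Lemma \ref{7lem}, and expanding $(y,\eta)(x,\xi)(y,\eta)^{-1}$ gives first component $yxy^{-1}={}^yx$ (matching the LHS) and second component
\[
\phi(t)=\bigl({}^{yxy^{-1}}\eta(t)\bigr)^{-1}\cdot{}^y\xi(t)\cdot\eta(t),
\]
while the LHS has second component $\psi(t)={}^y\xi(y^{-1}ty)$. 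So the proof reduces to the single identity $\psi(t)=\phi(t)$ in $\G_1$.

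This last identity is the main obstacle and is where all three axioms of Definition \ref{z} together with (CM1)--(CM2) for $\G_*$ come into play. The idea is to first expand $\xi(y^{-1}ty)$ via the crossed homomorphism property (\ref{ZE3}) as $\xi(y^{-1})\cdot{}^{y^{-1}}\xi(t)\cdot{}^{y^{-1}t}\xi(y)$, using $\xi(1)=1$ (which itself follows from (\ref{ZE3}) at $s=t=1$) to rewrite $\xi(y^{-1})$ as ${}^{y^{-1}}\xi(y)^{-1}$; after applying ${}^y$ one gets $\psi(t)=\xi(y)^{-1}\cdot\xi(t)\cdot{}^t\xi(y)$. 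One then uses (\ref{ZE1}) for $\eta$ together with the Peiffer identity (CM2) in $\G_*$ to rewrite ${}^{yxy^{-1}}\eta(t)={}^{{\sf z}_0(y,\eta)\cdot x\cdot {\sf z}_0(y,\eta)^{-1}}\eta(t)$ in a form that conjugates past ${}^y\xi(t)$, and dually uses (\ref{ZE1}) for $\xi$ (i.e.\ $\d\xi(y)=[x,y]$) to move $\xi(y)$ past $\eta(t)$. Both sides then collapse to the common expression, completing the verification of (CM2).
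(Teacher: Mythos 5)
Your decomposition is the same as the paper's: Lemma \ref{10} gives well-definedness, the action axioms and (CM1) are routine, you check that $\G_0$ acts by automorphisms of ${\bf Z}_0(\G)$ exactly as the paper does, and everything comes down to the Peiffer identity. Your reduction of (CM2) is also correct: the inverse formula $(y,\eta)^{-1}=\bigl(y^{-1},\,t\mapsto({}^{y^{-1}}\eta(t))^{-1}\bigr)$ is right, and the equation you arrive at,
$${}^y\xi(y^{-1}ty)=\bigl({}^{yxy^{-1}}\eta(t)\bigr)^{-1}\cdot{}^y\xi(t)\cdot\eta(t),$$
is precisely the paper's equation (\ref{dddd}) with the roles of the two elements interchanged (the paper avoids computing the inverse by instead verifying ${}^x(y,\eta)\cdot(x,\xi)=(x,\xi)\cdot(y,\eta)$; the difference is cosmetic). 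Your simplification ${}^y\xi(y^{-1}ty)=\xi(y)^{-1}\,\xi(t)\,{}^t\xi(y)$ via (\ref{ZE3}) is likewise correct.

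The gap is the final identification of $\xi(y)^{-1}\xi(t)\,{}^t\xi(y)$ with $\bigl({}^{yxy^{-1}}\eta(t)\bigr)^{-1}\,{}^y\xi(t)\,\eta(t)$. This is the only nontrivial computation in the whole proposition, and ``both sides then collapse to the common expression'' is an assertion, not an argument. Worse, the toolkit you name for it is not sufficient: (\ref{ZE1}) for $\eta$ plus the Peiffer identity only turn ${}^{yxy^{-1}}\eta(t)$ into $\eta(x)\,{}^x\eta(t)\,\eta(x)^{-1}$, and moving $\xi(y)$ past $\eta(t)$ via $\d\xi(y)=[x,y]$ does not close the loop. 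The decisive ingredient is (\ref{ZE2}) for $\eta$, i.e.\ ${}^y a=\eta(\d a)\,a$ --- the only axiom relating the action of $y$ on $\G_1$ to the function $\eta$ --- without which the term ${}^y\xi(t)$ on the right cannot be rewritten as $\eta([x,t])\,\xi(t)$ and matched against the left-hand side, where all $y$-conjugations have already been eliminated. The paper's verification runs through exactly this point: after substituting $t=xs$ it applies (\ref{ZE2}) for $\eta$, converts conjugation by $a={}^{x^{-1}}\xi(xs)$ into ${}^{\d a}(-)$ via Peiffer, and reassembles with the cocycle identity (\ref{ZE3}) for $\eta$. The identity you need is true, but your proof of (CM2) stops just before the step that carries all the content.
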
 

\begin{proof} Take two elements $(x_i,\xi_i)$, $i=1,2$ in ${\bf Z}_1(\G_*)$. Assume that
$$(x,\xi)=(x_1,\xi_1)\cdot (x_2,\xi_2)$$ is the product of these elements and $z\in \G_1$. We claim that 
$$^z(x,\xi)=\,^z(x_1,\xi_1)\cdot \, ^z(x_2,\xi_2).$$
Recall that $x=x_1x_2$ and $\xi(t)=\, ^{x_1}\xi_2(t)\cdot \xi_1(t).$ It follows that $$^z(x,\xi)=(^zx_1\, ^zx_2, \psi),$$ where
$$\psi(t)=\, ^z\xi(^{z\1}t)=\, ^{zx_1}\xi_2(^{z\1}t)\cdot ^{z}\xi_1(^{z\1}t).$$
On the other hand, we have 
$$ ^z(x_1,\xi_1) =(^zx_1,\psi_1) \ {\rm and} \  ^z(x_2,\xi_2) =(^zx_2,\psi_2),$$
where $\psi_i(t)=\, ^z\xi_i(^{z\1}t)), i=1,2$.
Hence 
$$^z(x_1,\xi_1)\cdot \, ^z(x_2,\xi_2)=(^zx_1\, ^zx_2, \psi')$$
where
$$\psi'(t)=^{^zx_1z}\xi_2(^{z\1}t)^z\xi_1(^{z\1}t)=^{zx_1}\xi_2(^{z\1}t)^z\xi_1(^{z\1}t)
$$
and the claim follows.

We still need to show the following equalities:
$$^z(^u (x,\xi))=\,^{zu}(x,\xi),$$
 $${\sf z}_0( ^z(x,\xi))=\,^z{\sf z}_0(x,\xi)$$
 and 
 $$^{{\sf z}_0(x,\xi)}(y,\eta)=(x,\xi)(y,\eta)(x,\xi)\1.$$
 
Since the proofs of the first two identities are straightforward, we omit them and check only the validity of the third identity, which is equivalent to
$$^x(y,\eta)(x,\xi)=(x,\xi)(y,\eta).$$
The RHS equals $(xy,t\mapsto\, ^x
\eta(t) \xi(t))$, while the LHS equals
$$(xyx\1,t\mapsto \, ^x\eta(x\1tx))(x,\xi)=(xy,t\mapsto \,^{xyx\1}\xi(t)\cdot ^x\eta(x\1tx)).$$
So we need to check
$$^x\eta(t)\xi(t)=\, ^{xyx\1}\xi(t)\cdot ^x\eta(x\1tx).$$
The last equality is equivalent to
$$\eta(t)\cdot ^{x\1}\xi(t)=\, ^{yx\1}\xi(t)\cdot \eta(x\1tx).$$
We set $t=xs$. Then the above relation is equivalent to
$$\eta(xs)\cdot ^{x\1}\xi(xs)=\, ^{yx\1}\xi(xs)\eta(sx).$$
So we need to check 
\begin{equation}\label{dddd}
\eta(xs)=\, ^{yx\1}\xi(xs)\cdot \eta(sx) \left ( ^{x\1}\xi(xs)\right )\1.
\end{equation}
According to the equation (\ref{ZE2}), we have

\begin{align*}
^{yx\1}\xi(xs)&= \eta (\d( ^{x\1}\xi(xs) ))  \cdot ^{x\1}\xi(xs)\\
&=\eta(x\1\d(\xi(xs))x)  \cdot ^{x\1}\xi(xs)\\
&=\eta(x\1 [x,xs]x)\cdot ^{x\1}\xi(xs)\\
&=\eta([x,s])\cdot ^{x\1}\xi(xs).
\end{align*}
We put $a=\, ^{x\1}\xi(xs)$, $b=\eta(sx)$. Since $$\d(a)=x\1 \d(\xi(xs))x=x\1 [x,xs]x=[x,s],$$ we obtain 
\begin{align*} ^{yx\1}\xi(xs)\cdot \eta(sx) \left ( ^{x\1}\xi(xs)\right )\1&=\eta([x,s])\cdot aba\1 \\
&=\eta([x,s])\cdot\, ^{\d(a)}b\\
&=\eta([x,s])\cdot ^{[x,s]}\eta(sx)\\
&=\eta([x,s]sx)\\
&=\eta(xs).
\end{align*}
Thus the equality (\ref{dddd}) holds and hence the result is proved. 
\end{proof}

\begin{Le}\label{derker} One has an exact sequence
	$$0\to \Der(\pi_1(\G_*),\pi_2(\G_*))\to {\bf Z}_1(\G_*)\xto{{\sf z}_1} \G_1,$$
	where as usual $\Der$ denotes the set of all crossed homomorphisms.
\end{Le}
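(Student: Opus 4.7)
The plan is to identify the kernel of ${\sf z}_0$ explicitly with $\Der(\pi_0(\G_*),\pi_1(\G_*))$ by unpacking the defining identities (\ref{ZE1})--(\ref{ZE3}) in the case $x=1$. An element $(x,\xi)$ lies in $\ker({\sf z}_0)$ precisely when $x=1$, so the sequence will be exact at ${\bf Z}_0(\G)$ as soon as I establish a natural bijection between such pairs and crossed homomorphisms $\pi_0(\G_*)\to\pi_1(\G_*)$, and it is exact at $\Der$ for the trivial reason that the correspondence is injective.

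The main step is to specialise each axiom to $x=1$. Identity (\ref{ZE1}) becomes $\d\xi(t)=[1,t]=1$, so $\xi$ takes values in $\ker(\d)=\pi_1(\G_*)$. Identity (\ref{ZE2}) becomes $\xi(\d a)={}^1a\cdot a\1=1$, so $\xi$ is trivial on $\im(\d)$ and therefore descends to a well-defined map $\bar\xi\colon\pi_0(\G_*)\to\pi_1(\G_*)$. Since $\pi_1(\G_*)$ is central in $\G_1$ and its $\G_0$-action factors through $\pi_0(\G_*)$, the remaining identity (\ref{ZE3}), namely $\xi(st)=\xi(s)\cdot{}^s\xi(t)$, translates into the crossed-homomorphism identity
$$\bar\xi(\bar s\bar t)=\bar\xi(\bar s)\cdot{}^{\bar s}\bar\xi(\bar t)$$
for $\bar\xi$ viewed as a derivation from $\pi_0(\G_*)$ to the module $\pi_1(\G_*)$. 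Conversely, any such derivation lifts through the composite $\G_0\twoheadrightarrow\pi_0(\G_*)\xrightarrow{\bar\xi}\pi_1(\G_*)\hookrightarrow\G_1$ to a map $\xi\colon\G_0\to\G_1$, and the three identities are easily checked for $(1,\xi)$, so the correspondence is bijective.

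It remains to see that this bijection is a group isomorphism. Using the multiplication from Lemma \ref{7lem}, for $(1,\xi),(1,\eta)\in\ker({\sf z}_0)$ one has
$$(1,\xi)\cdot(1,\eta)=\bigl(1,\,{}^1\eta\cdot\xi\bigr)=(1,\eta\cdot\xi),$$
and since both $\xi$ and $\eta$ take values in the abelian group $\pi_1(\G_*)$, this is the same as pointwise multiplication, which is exactly the standard group operation on $\Der(\pi_0(\G_*),\pi_1(\G_*))$. Thus the assignment $(1,\xi)\mapsto\bar\xi$ is a group isomorphism from $\ker({\sf z}_0)$ to $\Der(\pi_0(\G_*),\pi_1(\G_*))$, giving the required exact sequence.

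There is no real obstacle here beyond careful bookkeeping; the only tiny subtlety is verifying that the $\G_0$-action on $\pi_1(\G_*)$ inherited via (\ref{ZE3}) genuinely factors through $\pi_0(\G_*)$, which is immediate from axiom (\ref{CM2}) of a crossed module.
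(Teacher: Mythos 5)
Your argument is correct and follows essentially the same route as the paper: identify $\ker({\sf z}_0)$ as the pairs $(1,\xi)$, observe that (\ref{ZE1}) and (\ref{ZE2}) force $\xi$ to take values in $\pi_1(\G_*)$ and vanish on $\im(\d)$, and use the crossed-homomorphism identity (\ref{ZE3}) to descend to a derivation $\pi_0(\G_*)\to\pi_1(\G_*)$. You supply somewhat more detail than the paper (the converse lifting and the compatibility of group structures), which is harmless; the one point to phrase carefully is that the descent to $\pi_0(\G_*)$ uses (\ref{ZE3}) together with the vanishing on $\im(\d)$, not the vanishing alone.
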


\begin{proof} The pair $(x,\xi)$ is in the kernel of ${\sf z}_1:{\bf Z}_1(\G_*)\to \G_1$ iff $x=1$. Now Definition \ref{z} shows  that the values of $\xi$ lie in $\pi_2(\G_*)$ and $\xi$ vanishes on the image of $\d:\G_2\to \G_1$. Since it is a crossed homomorphism it factors in a unique way as a crossed homomorphism $\pi_1(\G_*)\to \pi_2(\G_*)$ and hence the result.

\end{proof}

Our next goal is to relate $ {\bf Z}_1(\G_*)$ to the centre of the monoidal category ${\sf Cat}(\G_*)$. We start with the following observation.

\begin{Le}\label{11lema} 
\begin{enumerate}[leftmargin=*]
\item [i)] Let $c\in \G_2$. Then $(\d(c),\zeta_c) \in {\bf Z}_1(\G_*)$, where $\zeta_c:\G_1\to \G_2$ is the map given by $$\zeta_c(t)=c(^tc)\1.$$

\item [ii)]  The map $\dd: \G_2\to {\bf Z}_1(\G_*)$ given by
	$$\dd(c)=(\d (c), \zeta_c)$$
	is a group homomorphism.
	
\item [iii)]  Define the action of ${\bf Z}_1(\G_*)$ on $\G_2$ by $$^{(x,\xi)}a:=\, ^xa$$
Then $\dd: \G_2\to {\bf Z}_1(\G_*)$ is a crossed module.
\end{enumerate}
	
\end{Le}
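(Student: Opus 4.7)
The strategy is to handle the three parts in order via direct verification. For (i), I need to check conditions (\ref{ZE1})--(\ref{ZE3}) for the pair $(\d(c),\zeta_c)$. Condition (\ref{ZE1}) is immediate from equivariance of $\d$: $\d\bigl(c\cdot(\,^tc)\1\bigr) = \d(c)\cdot t\d(c)\1 t\1 = [\d(c),t]$. Condition (\ref{ZE2}) is a single application of the Peiffer identity $\,^{\d a}c = aca\1$, which gives $\zeta_c(\d a) = c\cdot ac\1 a\1 = cac\1\cdot a\1 = \,^{\d c}a\cdot a\1$. For (\ref{ZE3}), since the action is a group action one has $\,^s(\,^tc) = \,^{st}c$, and therefore $\zeta_c(s)\cdot\,^s\zeta_c(t) = c(\,^sc)\1\cdot\,^sc\cdot(\,^{st}c)\1 = \zeta_c(st)$.

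For (ii), the first coordinate of $\dd(cd) = \dd(c)\dd(d)$ is automatic since $\d$ is a homomorphism, so the content is in verifying $\zeta_{cd}(t) = \,^{\d(c)}\zeta_d(t)\cdot\zeta_c(t)$. The left side, using $\,^t(cd) = (\,^tc)(\,^td)$, expands to $cd\cdot(\,^td)\1\cdot(\,^tc)\1$. On the right, Peiffer lets me replace $\,^{\d(c)}(-)$ by conjugation by $c$ inside $\G_1$, after which a $c\1 c$ collapses and both sides agree.

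For (iii), the rule $\,^{(x,\xi)}a := \,^xa$ is manifestly a group action, since it factors through the homomorphism ${\sf z}_0$. CM2 is then immediate: $\,^{\dd(a)}b = \,^{\d(a)}b = aba\1$ by the original Peiffer identity for $\G_*$. The substantive step is CM1, namely $\dd(\,^xa) = (x,\xi)\,\dd(a)\,(x,\xi)\1$. Here I would invoke the identity $(x,\xi)(y,\eta)(x,\xi)\1 = \,^x(y,\eta)$ established inside the proof of Proposition \ref{croq}, which reduces CM1 to proving $\G_0$-equivariance of $\dd$, where $\G_0$ acts on ${\bf Z}_0(\G)$ as in Lemma \ref{10}. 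The first coordinates then match by CM1 for $\d$, while the second coordinates match via $\,^x\zeta_a(x\1 tx) = \,^xa\cdot(\,^{tx}a)\1 = \zeta_{\,^xa}(t)$, using $\,^x(\,^{x\1 tx}a) = \,^{tx}a$. This second-coordinate match in CM1 is the only step that really does work; the rest is routine manipulation with the Peiffer identity.
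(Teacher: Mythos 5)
Your proof is correct, and parts (i) and (ii) follow the paper's own computations essentially verbatim (the same three checks of (\ref{ZE1})--(\ref{ZE3}) via the Peiffer identity, and the same collapse $\,{}^{\d c}\zeta_d(t)\cdot\zeta_c(t)=cd\,({}^t(cd))\1$). Where you genuinely diverge is in part (iii), specifically CM1. The paper verifies $\dd({}^xa)(x,\xi)=(x,\xi)\dd(a)$ by a direct, fairly long computation that unwinds both sides coordinatewise and eventually reduces to the identity $\xi(t)\cdot{}^{tx}(a\1)\,\xi(t)\1={}^{xt}(a\1)$, proven from (\ref{ZE2}). You instead factor the argument through two previously established facts: the Peiffer identity ${}^{{\sf z}_0(x,\xi)}(y,\eta)=(x,\xi)(y,\eta)(x,\xi)\1$ from the proof of Proposition \ref{croq}, and the $\G_0$-equivariance $\dd({}^xa)={}^x\dd(a)$, whose only nontrivial content is the second-coordinate check ${}^x\zeta_a(x\1tx)={}^xa\cdot({}^{tx}a)\1=\zeta_{{}^xa}(t)$. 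Since Proposition \ref{croq} precedes this lemma and $\dd(a)\in{\bf Z}_0(\G)$ by your part (i), there is no circularity, and the chain $\dd({}^{(x,\xi)}a)={}^x\dd(a)=(x,\xi)\dd(a)(x,\xi)\1$ gives CM1 at once. Your route is shorter and more structural --- it exposes that CM1 here is just equivariance of $\dd$ combined with the already-known crossed-module structure on ${\sf z}_0$ --- at the cost of depending on Proposition \ref{croq}; the paper's computation is self-contained and would survive a reordering of the results. As a bonus, your equivariance of $\dd$ is exactly what is needed later to see that $({\sf id},{\sf z}_0)$ and $(\dd,{\sf id})$ assemble into the commutative square summarised after Corollary \ref{14p}.
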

\begin{proof}
\begin{enumerate}[leftmargin=*]
	\item [i)] We have to check that the pair $(\d(c),\zeta_c)$ satisfies the identities (\ref{ZE1})-(\ref{ZE3}). In fact, we have
	$$
	\d (\zeta_c(t))=\d (c(^tc)\1)= \d c (t\d c t\1)\1=\d c t \d c \1 t\1=[\d c,t].
	$$
	Thus (\ref{ZE1}) holds. Next, we also have
	\begin{align*} \zeta_c(\d a)&= c(^{\d a}c)\1\\
	&= c(aca\1)\1\\
	&=[c,a]\\
	&=(^{\d c}a) a\1.
	\end{align*}
	Hence (\ref{ZE2}) holds. Finally, we have
	\begin{align*} \zeta_c(s)\, (^s\zeta_c(t))&= c(^sc)\1\cdot  ^s(c(^tc)\1)\\
	&=c(^sc)\1(^sc) (^{st}c)\1\\
	&=c(^{st}c)\1\\
	&=\zeta_c(st).
	\end{align*}
	This finishes the proof.
	
\item [ii)]  Clearly $(\d b,\zeta_b)(\d c,\zeta_c)=(\d (bc), ^{\d b}\zeta_c\cdot \zeta_b)$, where
	\begin{align*} (^{\d b}\zeta_c\cdot \zeta_b)(t)&=\, b(c \, ^t(c\1))b\1(b\, ^t (b\1))\\
	&= \, bc \, ^t(c\1)\, ^t (b\1)\\
	&= \, bc \, ^t(c\1b\1)\\
	&=\zeta_{bc}(t).
	\end{align*}
	Thus $(\d b,\zeta_b)(\d c,\zeta_c)=(\d (bc),\zeta_{bc})$. Hence $\dd$ is a group homomorphism.

\item [iii)]  We have to check the identities (\ref{CM1}) and (\ref{CM2}). For the second, observe that
$$^{\dd(b)}a=\, ^{(\d(b), \theta_b)}a=\, ^{\d(b)}a=bab\1.$$
For the first one, we need to check that
$$\dd(^{(x,\xi)}a)=(x,\xi)\dd(a) (x,\xi)\1.$$ 
Equivalently,
$$\dd(^xa)(x,\xi)=(x,\xi)\dd(a),$$
which is the same as
$$(\d (^xa),\zeta_{^xa})(x,\xi)=(x,\xi)(\d(a), \zeta_a).$$
The first coordinate of the LHS is equal to $x\d(a)x\1\cdot x=x\d(a)$, the same as the first coordinate of the RHS. Hence we need to show that for all $t\in \G_1$ one has
$$\, ^{x\d(a)x\1}\xi(t)\cdot ^xa(^{tx}a)\1=\, ^x(a(^ta)\1)\xi(t).$$
Equivalently,
$$^{\d(a)\cdot x\1}\xi(t)\cdot a \cdot ^{x\1tx}(a\1)=a\cdot ^t(a\1)^{x\1}\xi(t).$$
Using the equality $^{\d(a)}b=aba\1$ and cancelling out $a$, we see that the equality can be rewritten as
$$^{x\1}\xi(t)\cdot ^{x\1tx}(a\1)=\, ^t(a\1)\cdot ^{x\1}\xi(t).$$
Now this is obviously equivalent to
$$\xi(t)\cdot ^{tx}(a\1)=\, ^{xt}(a\1)\xi(t)$$
and hence to
$$\xi(t)\cdot ^{tx}(a\1)\xi(t)\1=\, ^{xt}(a\1).$$
Based on (\ref{ZE2}), we have $^xb=\xi(\d(b))b$. Hence we can write
\begin{align*}
^{xt}(a\1)&=\xi(\d (^t(a\1)))\cdot ^t(a\1)\\
&=\xi(t\d(a)t\1)\cdot ^t(a\1)\\
&=\xi(t)\cdot ^t\xi(\d(a\1)\cdot t\1)\cdot ^t(a\1)\\
&=\xi(t)\cdot ^t\xi(\d(a\1))\cdot ^{t\d(a\1)}\xi(t\1)\cdot ^t(a\1)\\
&=\xi(t)\cdot ^{tx}(a\1)\cdot ^ta \cdot ^t(a\1\xi(t\1)a)\cdot ^t(a\1)\\
&=\xi(t)\cdot ^{tx}(a\1)\xi(t)\1
\end{align*}
and we are done.
\end{enumerate}
\end{proof}

 \begin{Pro}\label{9} 
 	
 \begin{enumerate}[leftmargin=*]
 	
 \item [i)] Let $(x,\xi)\in {\bf Z}_1(\G_*)$. Then for any $y\in \G_1$ we have an arrow $$yx\xto{\xi(y)} xy$$ in the category $\cat(\G_*).$
 	
 \item [ii)] By varying $y$ one obtains a natural isomorphism of functors \[(-)\cdot x\xto{\bar{\xi}} x\cdot (-).\]
 	
  \item [iii)]  The pair $(x, \bar{\xi})$ is an object of ${\mathcal Z}(\cat(\G_*))$.
 	
 \item [iv)] The map $${\bf Z}_1(\G_*)\to {\mathcal Ob}({\mathcal Z}(\cat(\G_*))), \ \ (x,\xi)\mapsto (x,\bar{\xi})$$   is a bijection. 
 	
 \item [v)]  If $(x,\xi),(y,\eta)\in {\bf Z}_1(\G_*)$ and $a\in \G_2$ is an element such that $y=\d(a)x$, then the arrow $x\xto{a} y$ of ${\sf Cat}(\G_*)$
 	defines a morphism in \({\mathcal Z}(\cat(\G_*))\) iff $$(y,\eta(t))=\dd(a)(x,\xi).$$
 	Hence the map ${\bf Z}_1(\G_*)\to {\mathcal Ob}({\mathcal Z}(\cat(\G_*)))$ constructed in iv) extends to an isomorphism of monoidal categories 
 	$${\sf Cat}({\mathcal Z}_*(\G_*))\to {\mathcal Z}(\cat(\G_*)).$$
\end{enumerate}
 \end{Pro}
 \begin{proof} 
 	\begin{enumerate}[leftmargin=*]
 		
 \item [i)] We have to check that $\d (\xi(y)) yx=xy$, which is a consequence of the equality (\ref{ZE1}).
 	
 \item [ii)] Take a morphism $y\xto{a} z$ in $\cat(\G_*)$. Consider the diagram
 	$$\xymatrix{yx \ar[d]_{a\cdot 1_x } \ar[r]^{\xi(y)}& xy \ar[d]^{1_x\cdot a}\\
 		zx\ar[r]_{\xi(z)} &xz	
 	}$$

 Since $a\cdot 1_x=a$, $1_x\cdot a=\,^xa$ and $z=\d (a)y$, we have
 \begin{align*} \xi(z)\circ (a\cdot 1_x)&=\xi(\d (a) y) 
 a\\ &= \xi(\d (a)) \, ^{\d a}\xi(y)a \\ & =(^xa\cdot a\1)(a\xi(y)a\1)a\\ & = \, ^xa \xi(y)\\ & = (1_x \cdot a)\circ \xi(y).
 \end{align*}
 Thus the above diagram commutes and the result follows. 
 
  \item [iii)] We have to check the commutativity of the triangle in Section \ref{21s}. It requires us to verify that
 $$(\xi(y)\cdot 1_z ) \circ (1_y \cdot \xi(z))=\xi(yz)$$
 which is an immediate consequence of the equality (\ref{ZE3}), because $\xi(y)\cdot 1_z=\xi(y)$ and $1_y \cdot \xi(z)=\,^y \xi(z)$.
 
  \item [iv)] Take any object $(x,\bar{\xi})$ of the category ${\mathcal Z}(\cat(\G_*))$. By definition $x$ is an object of $\cat(\G_*)$, thus $x\in \G_1$. Moreover $\bar{\xi}$ is a natural isomorphism $(-)\cdot x\to x\cdot (-)$. The value of $\bar{\xi}$ on an object $y$ is a morphism  $yx\to xy$  of $\cat(\G_*)$ denoted by ${\xi_y}$. Thus $\xi_y\in \G_2$ satisfies the condition $\d \xi_y yx =xy$. It follows that $y\mapsto \xi_y$ defines a map $\xi:\G_1\to \G_2$ satisfying the condition (\ref{ZE1}). By definition of the centre, we have
 $$\xi_{yz}=\xi_y\cdot ^y\xi_z$$
 and the condition (\ref{ZE3}) follows. This implies that $\xi_1=1$. Finally, the condition (\ref{ZE2}) follows from the naturality of $\xi$. In fact, the commutative square in the proof of part ii) implies that for any $y\xto{a}z$ we have 
 $$\xi(z) a=\, ^xa \xi(y).$$
 We can take $y=1$. Then $z=\d a$ and hence
 $$\xi(\d a) a=\,^xa$$
 proving (\ref{ZE2}). Thus we have constructed the inverse map 
 $${\mathcal Ob}({\mathcal Z}(\cat(\G_*)))\to {\bf Z}_1(\G_*)$$
 proving the statement.
 
\item [v)] The diagram
 \[\xymatrix{z\t x\ar[d]_{1_z\t a}\ar[r] ^{\xi(z)} & x\t z \ar[d]^{a\t 1_z}\\ z\t y \ar[r]_{\eta(z)}& y\t z
 }\]
commutes  iff $\eta(z)=a\xi(z)(^za)\1$ and this happens for all $z$ iff
$$(y,\eta)=\dd(a)(x,\xi).$$
In fact, we have 
\begin{align*}
\dd(a)(x,\xi)&=(\d(a)x, z\mapsto\, ^{\d(a)}\xi(z)\zeta_a(z))\\ &=(\d(a)x, z\mapsto a\xi(z)(^za)\1).
\end{align*}
This fact already implies that the functor 
${\sf Cat}({\mathcal Z}_*(\G_*))\to {\mathcal Z}(\cat(\G_*))$
is an isomorphism of categories. Hence, we only need to check that the monoidal structures in both categories are compatible under this isomorphism.
Assume that $(x,\xi), (y,\eta)\in {\bf Z}_1(\G_*)$. Denote by $(x,\bar{\xi})$ and $(y,\bar{\eta})$ the corresponding objects of ${\mathcal Z}(\cat(\G_*))$. According to the definition of the monoidal structure on ${\mathcal Z}(\cat(\G_*))$, we have
\[(x,\bar{\xi}) \cdot (y,\bar{\eta}) =(xy,\bar{\zeta}),\]
 where $\bar{\zeta}_z$ is the composite
 \[zxy\xto{\xi(z)\cdot 1_y}xzy\xto{1_x\cdot \eta(y)} xyz\]
 which is the same as 
 $^x\eta(y) \xi(z)$ and we are done.
 \end{enumerate}
 \end{proof}
 
\begin{Co}\label{14p} The map $\G_2\xto{\dd} {\bf Z}_1(\G_*)$ together with the bracket
$$\{(x,\xi),(y,\eta)\}:=\xi(y)$$
defines a braided crossed module structure on $${\mathcal Z}_*(\G_*):=(\G_2\xto{\dd} {\bf Z_1}(\G_*) ).$$
\end{Co}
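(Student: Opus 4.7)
The plan is to obtain the result as a direct consequence of Lemma \ref{11lema}, the Joyal--Street correspondence (Lemma \ref{4js}) and the identification of ${\sf Cat}({\bf Z}_*(\G))$ with the categorical centre in Lemma \ref{9}, rather than verifying the axioms (BCM1)--(BCM5) by hand.

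First, the underlying group homomorphism $\dd:\G_1\to {\bf Z}_0(\G)$ together with the action described in Lemma \ref{11lema}(iii) is already a crossed module, so only a braiding on it needs to be produced and shown to satisfy the axioms of Definition \ref{bcm}. For the existence of such a braiding, I would argue as follows: the monoidal category ${\mathcal Z}(\cat(\G_*))$ is a braided monoidal category by the general construction recalled in Section \ref{21s}. Since Lemma \ref{9}(v) exhibits an isomorphism of monoidal categories ${\sf Cat}({\bf Z}_*(\G))\cong {\mathcal Z}(\cat(\G_*))$, the monoidal structure on ${\sf Cat}({\bf Z}_*(\G))$ inherits a braiding by transport along this isomorphism. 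Applying Lemma \ref{4js} to the crossed module $\dd:\G_1\to {\bf Z}_0(\G)$, this braided structure corresponds to a bracket $\{-,-\}:{\bf Z}_0(\G)\times {\bf Z}_0(\G)\to \G_1$ satisfying (BCM1)--(BCM5), which equips $\dd$ with a BCM structure.

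It then remains to identify the bracket explicitly with the formula $\{(x,\xi),(y,\eta)\}=\xi(y)$. By Lemma \ref{4js}, $\{(x,\xi),(y,\eta)\}$ is defined as the element of $\G_1$ realising the braid $(y,\eta)(x,\xi)\xto{}(x,\xi)(y,\eta)$ in ${\sf Cat}({\bf Z}_*(\G))$. Transporting along the isomorphism of Lemma \ref{9}(v) and using the definition of the braiding on the categorical centre from Section \ref{21s}, the corresponding morphism in ${\mathcal Z}(\cat(\G_*))$ is $\bar{\xi}_y:yx\to xy$, which by Lemma \ref{9}(i) is precisely $\xi(y)\in \G_1$.

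The main obstacle I expect is reconciling direction conventions. The braiding in Section \ref{21s} is written $c:(x,\xi)\t(y,\eta)\to (y,\eta)\t(x,\xi)$ and is given by $\eta_x$, whereas Lemma \ref{4js} declares $\{x,y\}$ to correspond to the braid $yx\to xy$. Careful bookkeeping is needed to confirm that the bracket $\{(x,\xi),(y,\eta)\}$ comes out as $\xi(y)$ (from $\bar{\xi}_y$ applied in the braid $yx\to xy$) rather than $\eta(x)$. Once this is checked, the corollary follows immediately; one could alternatively verify (BCM1)--(BCM5) directly from Definitions \ref{z} and \ref{bcm}, but the conceptual route through Lemmas \ref{9} and \ref{4js} avoids the lengthy identities.
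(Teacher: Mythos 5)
Your proposal is correct and follows essentially the same route as the paper: the paper likewise transports the braiding from ${\mathcal Z}(\cat(\G_*))$ through the monoidal isomorphism of Lemma \ref{9}(v) and invokes Lemma \ref{4js}, supplementing this with direct verifications of (\ref{BCM2}) and (\ref{BCM3}) where you instead trace the braid $\bar{\xi}_y$ through the isomorphism to identify the bracket as $\xi(y)$. Your direction-convention check comes out right, since the braid $(y,\eta)\t(x,\xi)\to(x,\xi)\t(y,\eta)$ is given by $\xi_y$.
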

\begin{proof} Since the monoidal structure on $ {\mathcal Z}(\cat(\G_*))$ is braided, the same will be true for ${\sf Cat}({\mathcal Z}_*(\G_*))$, if we transport the structure under the constructed isomorphism. Hence the result follows from Lemma \ref{4js}.

\end{proof}

\subsection{2-categorical meaning of the main construction}\label{2-maths}
To summarise, we see that the crossed module $\d:\G_2\to \G_1$  fits in a commutative diagram
$$\xymatrix{\G_2\ar[d]_{id} \ar[r]^{\dd}& {\bf Z}_1(\G_*)\ar[d]^{{\sf z}_1}\\
	\G_2 \ar[r]^{\d}& \G_1,
}
$$
where both $\G_2\xto{\dd} {\bf Z}_1(\G_*)$ and $ {\bf Z}_1(\G_*) \xto {{\sf z}_1} \G_1$ are crossed modules, the first one is even a braided crossed module.
The vertical arrows form a morphism of crossed modules
$${\sf z}_*:{\mathcal Z}_*(\G_*)\to \G_*.$$
The crossed module $ {\bf Z}_1(\G_*) \xto {{\sf z}_1} \G_1$ is denoted by $\G_*//{\mathcal Z}_*(\G_*)$. It is an easy exercise to see that one has an exact sequence
$$0\to \pi_1({\mathcal Z}_*(\G_*))\to \pi_1(\G_*)\to \pi_1(\G_*//{\mathcal Z}_*(\G_*))\to $$ 
$$\to \pi_0({\mathcal Z}_*(\G_*))\to \pi_0(\G_*)\to \pi_0(\G_*//{\mathcal Z}_*(\G_*))\to 0.$$

This suggests that we can think of \({\mathcal Z}_*(\G_*)\), \(\G_*\) and \(\G_*//{\mathcal Z}_*(\G_*)\) as fitting into a 2-mathematical analogue of a short exact sequence

\[0\to {\mathcal Z}_*(\G_*)\to \G_* \to \G_*//{\mathcal Z}_*(\G_*) \to 0.\]

However, in this paper, we will not give a formal definition of short exact sequences in the 2-mathematical sense.

\subsection{On homotopy groups of ${\mathcal Z}_*(\G_*)$}\label{42se} Let $\G_*$ be a crossed module. We have constructed a BCM ${\mathcal Z}_*(\G_*)$. In this section we investigate the homotopy groups $\pi_i({\mathcal Z}_*(\G_*))$, $i=1,2$ of this crossed module. 
The case $i=2$ is easy and the answer is given by the following lemma.
\begin{Le} \label{exseq} Let $\G_*$ be a crossed module. Then
	$$\pi_2({\mathcal Z}_*(\G_*) )\cong \H^0(\pi_1(\G_*),\pi_2(\G_*)).$$
\end{Le}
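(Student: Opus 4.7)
The plan is to unwind the definitions directly: identify $\pi_1({\bf Z}_*(\G_*))$ with the kernel of $\dd:\G_1\to {\bf Z}_0(\G_*)$ and then read off what it means for $\dd(c)$ to be the unit $(1,{\bf 1})$ of ${\bf Z}_0(\G_*)$.

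First I would recall from Lemma \ref{11lema} that $\dd(c)=(\d(c),\zeta_c)$ where $\zeta_c(t)=c(^tc)\1$. The unit of ${\bf Z}_0(\G_*)$ is $(1,{\bf 1})$, so $c\in\ker(\dd)$ iff the two conditions $\d(c)=1$ and $\zeta_c(t)=1$ for every $t\in \G_0$ both hold. The first condition is exactly the statement that $c\in \pi_1(\G_*)=\ker(\d)$. The second condition, once rewritten as $^tc=c$ for all $t\in \G_0$, says that $c$ is fixed by the action of $\G_0$ on $\G_1$.

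Next I would invoke the standard fact (already noted right after the definition of $\pi_0(\G_*)$ in Subsection \ref{13se}) that $\pi_1(\G_*)$ is a $\pi_0(\G_*)$-module, i.e.\ the $\G_0$-action on $\ker(\d)$ factors through $\pi_0(\G_*)$. Under this identification, the elements of $\pi_1(\G_*)$ fixed by all $t\in \G_0$ are exactly the elements fixed by $\pi_0(\G_*)$, which by definition is $\H^0(\pi_0(\G_*),\pi_1(\G_*))$. Assembling these observations gives a set-theoretic bijection
\[
\pi_1({\bf Z}_*(\G_*))=\ker(\dd)\;\xrightarrow{\ \cong\ }\;\H^0(\pi_0(\G_*),\pi_1(\G_*)),\qquad c\mapsto c.
\]

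Finally I would check that this bijection is a group homomorphism. Both sides are subgroups of $\G_1$ (with its original multiplication), and the inclusion $\ker(\dd)\hookrightarrow \G_1$ is a group homomorphism because $\dd$ is (Lemma \ref{11lema} ii)); likewise $\H^0(\pi_0(\G_*),\pi_1(\G_*))\subseteq \pi_1(\G_*)\subseteq \G_1$ is a subgroup. Hence the bijection is an isomorphism of groups. There is no real obstacle here; the only tiny point to notice is the compatibility of the $\G_0$-action with passage to $\pi_0(\G_*)$, which is automatic since $\G_1$ acts trivially on its central subgroup $\pi_1(\G_*)$ via conjugation.
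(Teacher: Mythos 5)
Your proposal is correct and follows essentially the same route as the paper: identify $\pi_1({\bf Z}_*(\G_*))$ with $\ker(\dd)$, unwind $\dd(c)=(\d(c),\zeta_c)$ into the two conditions $\d(c)=1$ and $^tc=c$ for all $t\in\G_0$, and recognise the result as $\H^0(\pi_0(\G_*),\pi_1(\G_*))$. The paper's proof is just a terser version of yours; your extra remarks on the group structure and the factorisation of the action through $\pi_0(\G_*)$ are correct but not needed beyond what the paper records.
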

\begin{proof} By definition $a\in \pi_1({\mathcal Z}_*(\G_*) )$ iff $\dd(a)=(1,{\bf 1})$, thus when $\d (a)=1$ and $\zeta_a(t)=1$ for all $t\in \G_1$. These conditions are equivalent to the conditions $a\in \pi_2(\G_*)$ and $^ta=a$ for all $t\in \G_1$ and hence the result.
\end{proof}

On the other hand, to obtain information on $\pi_1({\mathcal Z}_*(\G_*) )$, we need to fix some notation. The main result is formulated in terms of an exact sequence, see  Proposition \ref{15'pr}. 

For a group $G$, we let ${\mathcal Z}(G)$ denote the centre of $G$. Moreover, if $H$ is a $G$-group, we set
$$st_H(G)=\{g\in G|\, ^gh=h \ {\rm for \ all}\ h\in H\}.$$
It is obviously a subgroup of $G$. The intersection of these two subgroups is denoted by ${\sf Z}_{H}(G)$. Thus $g\in {\sf Z}_{H}(G)$ iff $^gx=x$ and $^gh=h$ for all $x\in G$ and $h\in H$.

Let $\G_*$ be a crossed module. In this case we have defined two groups
$${\sf Z}_{\pi_2(\G_*)}(\pi_1(\G_*)) \ \ {\rm and}
\ \  \ {\sf Z}_{\G_2}(\G_1).$$
We will come back to the second group in Section \ref{norr}. Now we relate the first group to $\pi_1(\mathcal Z_*(\G_*))$. To this end we fix some notation. If $x\in \G_1$, then we let ${\sf cl}(x)$ denote the class of $x$ in $\pi_1(\G_*)$. 
Take now an element $(x,\xi)\in {\bf Z}_1(\G_*)$. Accordingly, ${\sf cl}(x,\xi)$ denotes the class of $(x,\xi)$ in $\pi_1({\mathcal Z}_*(\G_*))$. 
I claim that the class ${\sf cl}(x)\in \pi_1(\G_*)$ belongs to $${\sf Z}_{\pi_2(\G_*)}(\pi_1(\G_*)).$$ In fact, the equation \ref{ZE1} of Definition \ref{z} implies that $[x,y]=\d \xi(y)$ and hence ${\sf cl}([x,y])=0$ in  $\pi_0(\G_*)$. It follows that ${\sf cl}(x)$ is central in $\pi_1(\G_*)$. Moreover, the equation \ref{ZE2} of Definition \ref{z} tells us that $\xi(\d a)=a(^xa)^{-1}$. In particular, if $a\in\pi_2(\G_*)$ (i.e. $\d (a)=1$) then $^xa=a$ and the claim follows. Thus we have defined the group homomorphism
$${\bf Z}_1(\G_*)\xto{\omega'} {\sf Z}_{\pi_2(\G_*)}(\pi_1(\G_*))$$
by $ \omega'(x,\xi):={\sf cl}(x).$

\begin{Pro}\label{15pr} 
	\begin{enumerate}[leftmargin=*]
		
		\item [i)] For any crossed module $\G_*$, the composite map
		\[\G_2\xto{\dd} {\bf Z}_1(\G_*)\xto{\omega'} {\sf Z}_{\pi_2(\G_*)}(\pi_1(\G_*))\]
		is trivial. Hence the map $\omega'$ induces a group homomorphism
		\[\omega: \pi_1({\bf Z}_*(\G_*))\to  {\sf Z}_{\pi_2(\G_*)}(\pi_1(\G_*)).\]
		
		\item [ii)] For a $1$-cocycle $\phi: \pi_1(\G_*)\to \pi_2(\G_*)$, the pair $(1,\tilde{\phi})\in {\bf Z}_1(\G_*)$, where $\tilde{\phi}:\G_1\to \G_2$ is the composite map
		\[\G_1\to \pi_1(\G_*)\xto{\phi} \pi_2(\G_*)\hookrightarrow \G_2.\]
		Moreover, the assignment \(\phi\mapsto {\sf cl}(1,\tilde{\phi})\) induces a group homomorphism
		\[f: H^1(\pi_1(\G_*), \pi_2(\G_*))\to \pi_1({\mathcal Z}_*(\G_*)).\]
		
		\item [iii)] These maps fit in an exact sequence
		\[0\to \H^1(\pi_1(\G_*), \pi_2(\G_*))\xto{f} \pi_1({\mathcal Z}_*(\G_*))\xto{\omega}  {\sf Z}_{\pi_2(\G_*)}(\pi_1(\G_*)).\]
	\end{enumerate}
\end{Pro}

\begin{proof}
	
	\begin{enumerate}[leftmargin=*]
		
		\item [i)] Take $a\in \G_2$. By construction $\dd(a)=(\d(a),\zeta_a)$. Hence \[\omega'\dd (a) ={\sf cl}(\d(a))=1.\]
		
		\item [ii)] Since $\phi$ is a 1-cocycle, $\tilde{\phi}$ satisfies the condition \ref{ZE3} of Definition \ref{z}. Next, the values of $\tilde{\phi}$ belong to $\pi_2(\G_*)$, so $\d \tilde{\phi}=1$ and the condition (\ref{ZE1}) follows. Finally, $\tilde{\phi}(\d a)=\phi([\d a])=\phi[1]=1$ and (\ref{ZE2}) also holds. It remains to show that if $\phi(t)=\, b(^tb)\1$, for an element $b\in\pi_2(\G_*)$, then ${\sf cl}(1,\tilde{\phi})=1$, but this follows from the fact that $\dd(b)=(1,\tilde{\phi}).$
		
		\item [iii)] Exactness at $\H^1(\pi_1(\G_*), \pi_2(\G_*))$: Assume $\phi:\pi_1(\G_*)\to \pi_2(\G_*)$ is a 1-cocycle such that ${\sf cl}(1,\tilde{\phi})$ is the trivial element in $\pi_1({\mathcal Z}_*(\G_*))$. That is, there exists a $c\in \G_2$ such that $\dd(c)=(1,\tilde{\phi})$. Thus $\d (c)=1$ and $\tilde{\phi}(t)=c(^tc)\1$. So $c\in \pi_2(\G_*)$ and the second equality implies that the class of $\phi$ is zero in $\H^1(\pi_1(\G_*), \pi_2(\G_*))$, proving that $f$ is a monomorphism.
		
		Exactness at $\pi_1({\mathcal Z}_*(\G_*))$: First take a cocycle $\phi:\pi_1(\G_*)\to \pi_2(\G_*)$. Then
		$$\omega\circ f([\phi]) =\omega({\sf cl}(1,\tilde{\phi}))
		={\sf cl}(1)=1.$$
		Take now an element $(x,\xi)\in {\mathcal Z}_1(\G_*)$ such that ${\sf cl}(x,\xi)\in \ker(\omega)$. Thus $x=\d a$ for \(a\in \G_2\). Then $[(x,\xi)]=[(y,\eta)]$, where $(y,\eta)=(x,\xi)\dd(a)\1$. Since $y=x\d a\1=1$, we see that $\d \eta(t)=[1,t]=1$ and $\eta(\d c)=1$. So $\eta= \tilde{\phi}$, where $\phi:\pi_1(\G_*)\to \pi_2(\G_*)$ is a 1-cocycle. Thus $f([\phi])=[(1,\eta)]=[(x,\xi)]$ and exactness at $\pi_1({\mathcal Z}_*(\G_*))$ follows.
	\end{enumerate}
\end{proof}

Our next aim is to define the homomorphism $$g: {\sf Z}_{\pi_2(\G_*)}(\pi_1(\G_*))\to \H^2(\pi_1(\G_*), \pi_2(\G_*))$$ and extend the exact sequence constructed in part iii) of Proposition \ref{15pr}. This will be based on the following result.

\begin{Pro}\label{15''pr}
	Take an element $x\in \G_1$ such that ${\sf cl} (x)$ is in the centre of $\pi_1(\G_*)$. Then there exists a map $\psi:\G_1\to\G_2$ such that $\d \psi(t)=[x,t]$  
	for all $t\in \G_1$. Consider the expression 
	\[\bar{\theta}(s,t):=\psi(s)\, ^s\psi(t)\psi(st)\1.\]
	Then $\bar{\theta}(s,t)\in \pi_2(\G_*)$ and the map $\bar{\theta}$ satisfies the 2-cocycle condition: $$^s\bar{\theta}(t,r) \bar{\theta}(s,tr)
	=\bar{\theta}(s,t)\bar{\theta} (st,r).$$
	Moreover, the class of $\bar{\theta}$ in $\H^2(\G_1,\pi_2(\G_*))$ is independent of the choice of $\psi$.
\end{Pro} 

\begin{proof} By assumption, we have $[x,t]\in \im(\d)$ for all $t\in\ G_1$. Thus we can choose $\psi(t)\in \G_2$ such that $[x,t]=\d(\psi(t))$. 
	By definition we have the equality
	$$\bar{\theta}(s,t)\psi(st)=\psi(s)\, ^s\psi(t).$$
	Applying $\d$ to this equation, we obtain
	$$\d (\bar{\theta}(s,t))[x,st]=[x,s] \, ^s[x,t].$$
	Since $[x,yz]=[x,y] \, ^y[x,z]$ holds in any group, it follows that $\d (\bar{\theta}(s,t))=1$. Thus $\bar{\theta}(s,t)\in \pi_2(\G_*)$.
	In particular, $\bar{\theta}(s,t)$ is a central element of $\G_2$.
	
	To show that the $2$-cocycle condition holds, we can write
	\begin{align*}
		\bar{\theta}(s,t)\bar{\theta} (st,r)\psi(str)&=\bar{\theta}(s,t)\bar{\psi}(st)\, ^{st}\bar{\psi}(r)\\
		&=\bar{\psi}(s)^s\bar{\psi}(t)^{st}\bar{\psi}(r)\\
		&=\psi(s)\, ^s\left (\bar{\theta}(t,r)\psi(tr)\right ).
	\end{align*}
	Since $^s\bar{\theta}$ is central, we obtain
	$$\bar{\theta}(s,t)\bar{\theta} (st,r)\psi(str)=\, 
	^s\bar{\theta(t,r)}\bar{\theta}(s,tr)\psi(str).$$
	After cancelling out $\psi(str)$, we obtain the expected property.
	
	If $\psi_1$ also satisfies the condition $\d \psi_1(s)=[x,t]$, we see that $\psi_1(s)=\psi(s) \phi(s)$, where $\d \phi(s)=1.$ Hence $\phi$ takes values in $\pi_2(\G_*)$. Since the last subgroup is closed under the action of $\G_1$ we also have $^s\phi(t)\in \pi_2(\G_*)$. In particular both $\phi(s)$ and $^s\phi(t)$ are central. It follows that
	\[\bar{\theta}_1(s,t)=\bar{\theta}(s,t) \phi (s)\, ^s\phi(t)\phi(st)\1\]
	and hence the result.  
\end{proof}

Our next aim is to show that if one chooses the map $\psi$ from Proposition \ref{15''pr} more carefully, then one can achieve that $\bar{\theta}$ factors through $\pi_1(\G_*)\times \pi_1(\G_*)$ and hence one obtains an element in $\H^2(\pi_1(\G_*), \pi_2(\G_*))$. Recall that ${\sf cl}$ denotes the canonical surjection $$\G_1\xto{\sf cl}\pi_1(\G_*)=\frac{\G_1}{\im(\d)}.$$

\begin{Pro}\label{15'pr}
	Take an element $x\in \G_1$ such that ${\sf cl}(x)\in {\sf Z}_{\pi_2(\G_*)}(\pi_1(\G_*))$ 
	and choose a section $\al:\pi_1(\G_*)\to \G_1$ of ${\sf cl}$ such that  $\al(1)=1$. Then there is a map $$\beta:\pi_1(\G_*)\to \G_2$$ such that
	\begin{equation}\label{ziaagi}\d(\beta(z) )=[x,\al(z)]\end{equation}
	for all $z\in \pi_1(\G_*)$. Moreover, the formula 
	\begin{equation}\label{psisf}
		\psi(s)= \,^xa\beta({\sf cl}( s)) a^{-1}
	\end{equation}
	gives rise to a well-defined map  $\psi:\G_1\to\G_2$. Here $a$ is an element in $\G_2$ for which
	\begin{equation}\label{tolobasa}
		s=\d(a)\al({\sf cl}( s)).
	\end{equation}
	Then $\psi$ and the corresponding $\bar{\theta}$ (see Proposition \ref{15''pr}) satisfy the following properties:
	\begin{enumerate}[leftmargin=*]
		\item [i)] $\d \psi(t)=[x,t]$.
		
		\item [ii)] $\psi(\d a)=\, ^xa \cdot a\1$.
		
		\item [iii)]  $\bar{\theta}(\d (a),\al(z))=1$.
		
		\item [iv)] $\bar{\theta}(\al(z),\d (a))=1$.
		
		\item [v)] $\bar{\theta}(\d (a), \d (b))=1$.
		
		\item [vi)] The function $\bar{\theta}$ factors through $\pi_1(\G_*)\times \pi_1(\G_*)\to \pi_2(\G_*)$, and hence defines a class $\theta$ in $\H^2(\pi_1(\G_*),\pi_2(\G_*))$.
		
		\item [vii)] The class $\theta$ is independent of the choice of $\psi$ and $\alpha$.
		
		\item [viii)] The assignment $x\mapsto \theta$ defines a group homomorphism

		\[g: {\sf Z}_{\pi_2(\G_*)}(\pi_1(\G_*))\to \H^2(\pi_1(\G_*), \pi_2(\G_*)),\]
		which fits in the exact sequence
		\[0\to \H^1(\pi_1(\G_*), \pi_2(\G_*))\xto{f} \pi_1({\mathcal Z}_*(\G_*))\xto{\omega}  {\sf Z}_{\pi_2(\G_*)}(\pi_1(\G_*)) \xto{g} \H^2(\pi_1(\G_*), \pi_2(\G_*)).\]
	\end{enumerate}		
	
\end{Pro}
\begin{proof} By the condition on $x$ one has $[x,y]\in \im(\d)$ for all $y\in \G_1$. Thus $[x,y]=\d (m_y)$ for some $m_y\in \G_2$. For a given $z\in \pi_1(\G_*)$ we take $y=\al(z)$ and denote the corresponding  $m_{y}$ by $\beta(z)$. Then we have $[x,\al(z)]=\d(\beta(z))$.
	
	Clearly any element $s\in \G_1$ can be written as in the formula (\ref{tolobasa}) for some $a\in\G_2$. Let us show that the expression $ \,^xa\psi(\al({\sf cl}( s)) a^{-1}$ in the equality (\ref{psisf}) does not depend on the choice of $a$ in the equality (\ref{tolobasa}). In fact, if we have another decomposition  $s=\d(b)\al({\sf cl}(s))$, then $\d(b)=\d(a)$. It follows that $b=ac$, where $\d(c)=1$. Thus  $c\in\pi_2(\G_*)$ and in particular it is central and since ${\sf cl}(x)\in {\sf Z}_{\pi_2(\G_*)}(\pi_1(\G_*))$ we also have $^xc=c$. Thus we have
	\[^xb\psi(\al({\sf cl}( s))b^{-1} =\,^xa\,^xc\cdot\psi(\al({\sf cl}( s)) c^{-1}a^{-1}= \,^xa \psi(\al({\sf cl}( s)) a^{-1}.\]
	Hence the function $\psi$ is well-defined. 
	
	Now we verify the properties i)-viii).  
	\begin{enumerate}[leftmargin=*]
		\item [i)]
		Observe that
		\begin{align*}
			\d (\psi(s))&=\d(^xa\psi(\al({\sf cl}(s)) a^{-1} )\\ &=x\d(a)x^{-1}[x,\al({\sf cl}( s))]\d(a)^{-1}\\&=x\d(a)\al({\sf cl}(s))x^{-1}\al({\sf cl}( s))^{-1}\d (a)^{-1}\\ &=[x, \d(a) \al({\sf cl}( s))]
		\end{align*}
		Since $s= \d (a)\al({\sf cl}( s))$, we see that \(\psi\) satisfies the condition i).
		
		\item [ii)] If $s=\d (a)$, we have ${\sf cl}(s)=1$ and thus $\psi(\d(a))=\, ^x a\cdot a^{-1}$ and hence the condition ii) also holds.
		
		\item [iii)] By taking $a=1$  and $s=\al(z)$ in (\ref{psisf}), we obtain $\psi(\al(z ))=\beta(z).$
		By property ii) we have $\psi(\d (a))= \, ^xa\cdot a^{-1}$. Therefore
		$$\psi( \d (a))\, ^{\d (a)}\psi(\al(z))=\, ^xa\cdot a^{-1} a \psi(\al(z))a^{-1}=\, ^xa\cdot \beta(z)a^{-1}=\psi(\d (a)\al(z))$$
		and the condition iii) follows.
		
		\item [iv)] We need to show that \begin{equation}\label{dtheta} \psi(\al(z)\d (a))=\psi(\al(z))^{\al(z)}\psi(\d (a)).
		\end{equation}
		The RHS is equal to $\beta(z)\, ^{\al(z)x}a\,( ^{\al(z)}{a^{-1}})$. Since $\al(z)\d (a)=\d (b)\al(z)$, where $b=\, ^{\al(z)}a$, we can rewrite the LHS:
		\begin{align*} \psi(\al(z)\d (a))= &\psi(\d (b) \al(z))\\
			=&\, ^xb\beta(z) b^{-1}\\
			=&\,^{x\alpha(z)}a \beta(z)\, (^{\al(z)}a^{-1}).
		\end{align*}
		Comparing these expressions we see that the equality (\ref{dtheta}) is equivalent to
		\begin{equation}\label{d1theta}^{x\alpha(z)} a\beta(z)=\beta(z)\, ^{\alpha(z)x}a.
		\end{equation} 
		We have 
		\begin{align*} 
			\beta(z)\, ^{\alpha(z)x}a=& [\beta(z),\, ^{\alpha(z)x}a] \, ^{\alpha(z)x}a\beta(z)\\
			=&\beta(z)\, ^{\alpha(z)x}a\beta(z)^{-1}\beta(z)\\
			=& \, ^{\d (\beta(z)) \alpha(z)x}a\beta(z)\\
			=&\,^{x\alpha(z)} a\beta(z)
		\end{align*}
		and part iv) is proved. Here we used the fact that $\d \beta(z)=[x,\alpha(z)]$.
		
		\item [v)] We have $\psi(\d(a) \d(b))=\psi (\d(ab))=\, ^x (ab) \, (ab)^{-1}$. On the other hand $$ \psi(\d(a))\, ^{\d(a)} \psi(\d (b))=\, ^x a\, a^{-1}  a ^xb \ b^{-1}a^{-1}=\, ^x(ab) \, (ab)^{-1}$$ and the result follows.
		
		\item [vi)] In the 2-cocycle condition from Proposition \ref{15''pr} we first put $s=\d(a)$, $t=\d (b)$ $r=\al(z)$ to obtain 
		\[^{\d(a)}\bar{\theta}(\d (b),\al(z))\bar{\theta}(\d (a),\d (b)\al(z)) =\bar{\theta}(\d(a),\d(b))\bar{\theta}(\d(ab), \al(z))\]
		Use the relations iii)-v) to obtain $\bar{\theta}(\d (a),\d (b)\al(z))=1$. Thus $\bar{\theta}(\d (a),s)=1$ for all $s\in \G_1$. Quite similarly
		$\bar{\theta}(s,\d (a))=1$ for all $s$. Now we put $r=\d (a)$ to obtain $\bar\theta(s,t\d (a))
		=\bar\theta (s,t)$, showing that the map $\bar\theta(s,-)$ factors through the group $\pi_1(\G_*)$. Similarly for the first argument.
		
		\item [vii)] Now we prove that the class $\theta$ is independent of the choices which we made, namely of $\al$, $\beta$ and $a$ in equality (\ref{tolobasa}). We already proved that for chosen $\al$ and $\beta$ the function $\psi$ (and hence $\bar{\theta}$) is independent of the choice of $a$. Assume $\al$ is chosen and we have $\beta_1$ and $\beta$ for which the equality (\ref{ziaagi}) holds. Then there is a map $\gamma:\pi_1(\G_*)\to \G_2$ such that $\beta_1(z)= \gamma(z)\beta(z)$. Since $\d(\gamma(z))=1$, we see that $\gamma(z)$ and $^s\gamma(z)$ are central. It follows that
		\[\psi_1(s)=\,^xa\beta_1({\sf cl}(s))a^{-1}=\psi(s)\gamma({\sf cl}(s)).\]
		It follows that
		\[\bar{\theta}_1(s,t)=\bar{\theta}(s,t) \gamma ({\sf cl}( s))\, ^s\gamma({\sf cl}( t))\gamma({\sf cl}( s) \cdot {\sf cl}( t))\1\]
		and thus both $\bar{\theta}$ and $\bar{\theta_1}$ define the same class in $\H^2(\pi_1(\G_*), \pi_2(\G_*))$. Consider now the case when we have chosen another section of ${\sf cl}$, say $\al_1$. Then there exists a function $\eta:\pi_1(\G_*)\to \G_2$ for which
		\[\al_1(z)=\d (\eta(z))\al(z).\]
		As a function $\beta_1$ satisfying the relation $\d(\beta_1(z))=[x,\al_1(x)]$ we can choose
		\[\beta_1(z)=\,^x\eta(z)
		\beta(z)\eta(z)^{-1}.\]
		In fact, we have
		\begin{align*}\d(\beta_1(z))=&\d(^x\eta(z)
			\beta(z)\eta(z)^{-1} )\\
			=&x\d (\eta(z))x^{-1}[x,\al(z)]\d(\eta(z))\1\\
			=&x\d(\eta(z))\al(z)x\1 \al(z)\1 \d(\eta(z))\1\\
			=&x\al_1(z)x\1 \al_1(z)\1\\
			=&[x,\al_1(z)].
		\end{align*}
		Next, we set $a_1=a\eta({\sf cl}( s))^{-1}$. Then we have $s=\d (a_1)\al_1({\sf cl}( s))$. Now we can write
		\begin{align*}\psi_1(s)=&\, ^xa_1\beta_1({\sf cl}(s)) a_1^{-1}\\
			&= \, ^xa_1^x\eta({\sf cl}(s)) \beta({\sf cl}(s))\eta({\sf cl}(s))^{-1} a_1^{-1}\\
			&=
			\, ^x(a_1\eta({\sf cl}( s))) \beta({\sf cl}(s)) (a_1\eta({\sf cl}(s)))^{-1}\\
			&=\psi(s) 
		\end{align*}
		and part vii) is proved.
		
		\item [viii)] Exactness at ${\sf Z}_{\pi_2(\G_*)}(\pi_1(\G_*))$: Take $(x,\xi)\in {\bf Z}_1(G_*)$. Since $\omega'(x,\xi)={\sf cl}(x)$, we can choose $\psi=\xi$ for $g({\sf cl}(x))$. Clearly $\bar{\theta}=1$ for this $\psi$ and hence $g\circ \omega=1$. 
		Take now $x\in \G_1$ such that ${\sf cl}(x)\in {\sf Z}_{\pi_2(\G_*)}(\pi_1(\G_*))$. Assume $g({\sf cl}(x))=1$. Thus there exists a function $\phi: \pi_1(\G_*)\to \pi_2(\G_*)$ for which
		\[\bar{\theta} ({\sf cl}(s),{\sf cl}(t))=\phi({\sf cl}(s))\, ^s\phi({\sf cl}(t))\phi({\sf cl}(st))\1.\]
		Since $\phi$ takes values in $\pi_2(\G_*)$, we see that $\phi({\sf cl}(s))$ is central for all $s\in\G_1$. As $x$ acts trivially on $\pi_2(\G_*)$, it follows that the function $\psi'(t)=\psi(t)\phi(t)$ also satisfies the conditions in i) and ii) and, moreover, $\bar{\theta'}=1$, meaning that $\psi'$ is a $1$-cocycle. Thus 
		$(x,\psi ')\in {\bf Z}_1(\G_*)$ and exactness follows.  
		
	\end{enumerate}
	
\end{proof}

\subsection{Relation to nonabelian cohomology}\label{43se} Crossed modules can be used to define low dimensional non-abelian cohomologies of groups. This was first observed by Dedecker in the 60's (see for instance \cite{dedecker} and references therein) and then developed by D. Guin \cite{dg}, Breen, Borovoi  \cite{borovoi}, Noohi \cite{noohi}. See also \cite{mariam_preparation}.

Let $\G_*$ be a crossed module. According to Borovoi, the group $\H^0(\G_1,\G_*)$ is defined by
$$\H^0(\G_1,\G_*)=\{a\in \G_2|\,\d a=1 \ {\rm and} \  ^xa=a\ {\rm for \ all}\ x\in \G_1\}.$$
Clearly, $\H^0(\G_1,\G_*)=\H^0(\pi_1(\G_*),\pi_2(\G_*))$.
It is a central subgroup of $\G_2$.

In order to define the first cohomology group $\H^1(\G_1,\G_*)$, we first introduce the group $\Der_{\G_1}(\G_1,\G_1)$. Elements of $\Der_{\G_1}(\G_1,\G_1)$ are pairs $(g,\gamma)$ (see \cite[Definition 1.1.]{dg}), where $g\in \G_1$ and $\gamma:\G_1\to \G_2$ is a function for which two conditions hold:
$$\gamma(gh)=\gamma(g) \, ^{g}\gamma(h)\, \ \ \ 
{\rm and}\ \  \d \gamma(t)=[g, t].$$ Here $g,h,t\in\G_1$. Comparing with Definition \ref{z}, we see that these are exactly the conditions (\ref{ZE1}) and (\ref{ZE3}) of Definition \ref{z}. Hence ${\bf Z}_1(\G_*) \subset \Der_{\G_1}(\G_1,\G_2)
$. In fact, ${\bf Z}_1(\G_*)$ is a subgroup of $\Der_{\G_1}(\G_1,\G_2)$, where the group structure on $\Der_{\G_1}(\G_1,\G_2)$ is defined as follows.

If $(g,\gamma), (g',\gamma')\in \Der_{\G_1}(\G_1,\G_1)$ then $(gg',\,  \gamma\ast \gamma')\in \Der_{\G_1}(\G_1,\G_2)$ (see \cite[Lemme 1.2.1]{dg}), 
where $\gamma\ast \gamma'$ is defined by
$$(\gamma\ast \gamma')(t)=\, ^g\gamma'(t)\cdot \gamma(t).$$
Thus $\Der_{\G_1}(\G_1,\G_2)$ is equipped with a binary operation $$(g,\gamma)(g',\gamma')\mapsto (gg',\,  \gamma\ast \gamma').$$ Thanks to  \cite[Lemme 1.2.2]{dg} in this way one obtains a group structure on $\Der_{\G_1}(\G_1,\G_2)$. 

The group $\H^1(\G_1,\G_*)$ is defined as the quotient  $\Der_{\G_1}(\G_1,\G_2)/\sim$ where 
$$(g,\gamma)\sim (g',\gamma')$$
iff there exists $a\in\G_2$ such that $\gamma'(t)=a\1\gamma(t)(^ ta)$ for all $t\in \G_1$ and $g'=\d (a)\1 g$.

The following fact is a direct consequence of the definition.

\begin{Le} One has a commutative diagram with exact rows
$$\xymatrix{
	0\ar[r] &\H^0(\G_1,\G_*)\ar[r]& \G_2\ar[rr]^\dd& &\Der_{\G_1}(\G_1,\G_2)\ar[r] & \H^1(\G_1,\G_*)\ar[r] & 1 \\
	0\ar[r] & \pi_2({\mathcal Z}_*(\G_*)) \ar[u]_{\cong }\ar[r]& \G_2\ar[u]_{Id}\ar[rr]^\dd &&{\bf Z}_1(\G_*)\ar[r] \ar@{^{(}->}[u]
	& \pi_1({\mathcal Z}_*(\G_*)) \ar@{^{(}->}[u] \ar[r]& 1
}
$$

\end{Le}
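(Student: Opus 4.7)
The plan is to observe that the top row is simply the defining exact sequence of $\H^*(\G_0,\G_*)$ from \cite{dg}: by construction $\H^0$ is the kernel of $\dd:\G_1\to \Der_{\G_0}(\G_0,\G_1)$, where $\dd(c)=(\d(c),\zeta_c)$ with $\zeta_c(t)=c(^tc)\1$, and $\H^1$ is defined as the quotient of $\Der_{\G_0}(\G_0,\G_1)$ by the left action of $\dd(\G_1)$ through the described equivalence relation. The bottom row is equally tautological: by general crossed module theory $\pi_1$ of a crossed module is the kernel of its boundary, and $\pi_0$ is the quotient by the (normal) image. Applied to ${\bf Z}_*(\G_*)=(\G_1\xto{\dd}{\bf Z}_0(\G_*))$ this yields precisely the bottom row.

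Next, I will build the vertical maps. The inclusion ${\bf Z}_0(\G_*)\hookrightarrow \Der_{\G_0}(\G_0,\G_1)$ is immediate from comparing the defining conditions: pairs $(g,\gamma)$ in $\Der_{\G_0}(\G_0,\G_1)$ are required to satisfy $\d\gamma(t)=[g,t]$ and the crossed homomorphism rule, which are precisely (\ref{ZE1}) and (\ref{ZE3}); the group ${\bf Z}_0(\G_*)$ is cut out inside $\Der_{\G_0}(\G_0,\G_1)$ by the additional equation (\ref{ZE2}). A direct check of formulas shows that the multiplication rule $(\gamma\ast\gamma')(t)=\,^g\gamma'(t)\gamma(t)$ of $\Der_{\G_0}$ coincides with the rule $(^x\eta\cdot \xi)(t)$ of Lemma \ref{7lem}, so the inclusion is a group homomorphism. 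For the left vertical identification, the previous lemma of this section gives $\pi_1({\bf Z}_*(\G_*))\cong \H^0(\pi_0(\G_*),\pi_1(\G_*))$, and the Borovoi description $\H^0(\G_0,\G_*)=\{a\in\G_1\mid \d a=1,\, {}^xa=a\}$ equals the same thing, as noted in the text preceding.

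Commutativity of the three squares will then be transparent. The left square commutes because both sides are simply inclusions $\H^0\hookrightarrow \G_1$. The middle square commutes because the map $\dd$ taking $c$ to $(\d c,\zeta_c)$ is defined by the same formula whether we view the target as ${\bf Z}_0(\G_*)$ (Lemma \ref{11lema}) or as $\Der_{\G_0}(\G_0,\G_1)$. The right square commutes because the right-hand vertical map is the map induced on quotients by passing to cosets of $\dd(\G_1)$ on both sides.

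The one nontrivial point, which I would address last, is injectivity of the right vertical map $\pi_0({\bf Z}_*(\G_*))\hookrightarrow \H^1(\G_0,\G_*)$. Here I will verify that the equivalence relation defining $\H^1$, when restricted to pairs $(x,\xi),(x',\xi')\in{\bf Z}_0(\G_*)$, agrees with the coset equivalence by $\dd(\G_1)$ that defines $\pi_0$. A short computation shows that the relation $\gamma'(t)=a\1\gamma(t)(^ta)$, $g'=\d(a)\1g$ is precisely the formula for $(x',\xi')=\dd(a^{-1})\cdot(x,\xi)$ using the multiplication law of Lemma \ref{7lem} and $\zeta_{a^{-1}}(t)=a^{-1}(^ta)$; thus the two equivalences coincide on ${\bf Z}_0(\G_*)$, giving the injection. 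Assembling these pieces yields the claimed commutative diagram with exact rows.
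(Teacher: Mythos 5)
Your proposal is correct and follows essentially the same route as the paper: the paper's proof is simply ``a direct consequence of the definition,'' relying on the identifications spelled out just before the Lemma (conditions (\ref{ZE1}) and (\ref{ZE3}) are Guin's defining conditions for $\Der_{\G_0}(\G_0,\G_1)$, the two multiplication formulas coincide, and the equivalence relation defining $\H^1$ is left translation by $\im(\dd)$). Your explicit check that $\dd(a)\1(x,\xi)=(\d(a)\1x,\,t\mapsto a\1\xi(t)\,{}^ta)$, which gives injectivity of $\pi_0({\bf Z}_*(\G_*))\to\H^1(\G_0,\G_*)$, is exactly the computation the paper leaves implicit.
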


\subsection{Comparison with Norrie's centre}\label{norr} Let $\G_*$ be a crossed module. Then we have
 $$\H^0(\G_1,\G_2)=\{a\in \G_2| \, ^xa=a\ {\rm for \ all}\ x\in \G_1\}.$$
 Take $a\in \H^0(\G_1,\G_2)$. I claim that $$\d (a)\in {\sf Z}_{\G_2}(\G_1)={\mathcal Z}(\G_1)\cap st_{\G_2}(\G_1).$$ In fact, we have
 $$x\d (a)x\1=\d({^xa})=\d(a)$$
 for all $x\in \G_1$. Hence $\d(a)\in {\mathcal Z}(\G_1)$. For any $b\in \G_2$ we have $\, ^{\d b}a=a$, thus $a\in {\mathcal Z}(\G_2)$. It follows that $\, ^{\d a}b=b$ and hence $$\d (a) \in st_{\G_1}(\G_2)=\{x\in \G_1| ^xa=a \ {\rm for \ all}\ a\in \G_2\}$$ 
 and the claim follows.
 
 Hence we have a homomorphism of abelian groups
 $$\H^0(\G_1,\G_2) \xto{\d} {\sf Z}_{\G_2}(\G_1),$$
 which can be considered as a crossed module with trivial action of the target group on the source. This crossed submodule is denoted by  $\nc(\G_*)$ and is called Norrie's centre of $\G$ \cite[p. 133]{norie}. Take $x\in {\sf Z}_{\G_2}(\G_1)$. Then $(x,{\bf 1})\in {\bf Z}_1(\G_*)$, where ${\bf 1}$ is the constant map $\G_1\to \G_2$ with value $1$. Clearly $j_1(x)=(x,{\bf 1})$ defines an injective homomorphism $j_1:{\sf Z}_{\G_2}(\G_1)\to {\bf Z}_1(\G_*)$. In fact, if we let $j_2$ denote the inclusion $\H^0(\G_1,\G_2)\subset \G_2$, one obtains an injective morphism of crossed modules $j_*:\nc(\G_*) \to {\mathcal Z}_*(\G_*)$ which induces an isomorphism on $\pi_2$ and a monomorphism on $\pi_1$. In general $j_*$ is not a weak equivalence, see Section \ref{ex45}. 
 \subsection{The centre of the crossed module $D_4\to Aut(D_4)$}\label{ex45} 
For any group $G$ there is a crossed module $\partial:G\to Aut(G)$, where $\partial(g)$ is the inner automorphism corresponding to $g\in G$. This crossed modules is denoted by ${\sf AUT}(G)$. The centre of ${\sf AUT}(G)$ is in a sense the ``2-dimensional centre'' of $G$.  We compute this centre for $G=D_4$, the dihedral group of order $8$. Recall that $D_4$ is generated by $a,b$ modulo the relations $a^4=1=b^2$ and $bab=a^3$. Denote by $\bar{D}_4$ the second copy of the same group. To distinguish it from the previous one, we use $\al$ and $\beta$ for the same generators, but now considered as elements of $\bar{D}_4$. Define the homomorphism $$\d:D_4\to \bar{D}_4$$ 
 by $\d(a)=\al^2$, $\d(b)=\beta$. The group $\bar{D}_4$ acts on $D_4$ by
 $$^{\al}a=a,\quad ^{\al}b=ab,$$
 $$^{\beta}a=a\1,\quad ^{\beta}b=b.$$
 Then $\d:D_4\to \bar{D}_4$ is a crossed module and it is easy to check that it is isomorphic to ${\sf AUT}(D_4)$.

 To describe the centre of $\d:D_4\to \bar{D}_4$, we first observe that there are unique crossed homomorphisms $\xi,\eta,\theta:\bar{D}_4\to D_4$ for which
 $$\xi(\al)=a^2, \quad \xi(\beta)=a; \quad \eta(\al)=a, \quad \eta(\beta)=1, \quad \theta(\al)=a^2, \quad \theta(\beta)=1.$$
 Then one checks that the pairs
 $$A=(\al, \xi), \quad B=(\beta,\eta),\quad C=(1,\theta)$$
 belong to ${\bf Z}_1(\d)$. Here, for simplicity, we write ${\bf Z}_1(\d)$ instead of ${\bf Z}_1(\d:D_4\to \bar{D}_4)$.
One easily checks the following  equalities
 \begin{equation}\label{abcrel} C^2=1=B^2=A^4, \quad AC=CA, \quad BC=CB,\quad BAB=A^3.
 \end{equation}
 It follows that in the exact sequence in Lemma \ref{derker} the last map is surjective, because the image contains generators $\al,\beta$ and also it splits. It has the form
 $$0\to {\sf C_2}\to  {\bf Z}_1(\d) \xto{{\sf z}_0} \bar{D}_4 \to 0$$
 where the image of the nontrivial element of ${\sf C_2}$ is $C=(1,\theta)$. It follows that $ {\bf Z}_1(\d)\cong 
 {\sf C_2}\times D_4$. In other words, $ {\bf Z}_1(\d)$  
 is a group generated by $A,B,C$ modulo the relations listed in (\ref{abcrel}). 
  
With these notations, one easily checks that for $\dd:D_4\to {\bf Z}_1(\d)$ one has
$$\dd(a)=A^2 \ {\rm and} \ \dd(b)=BC.$$
Moreover, the corresponding BCM structure on $${\mathcal Z}_*(\d)=\left ( \dd:D_4\to {\bf Z}_1(\d)\right )$$
is in fact a RQM (see Definition \ref{rqm}) and uniquely determined by 
 $$\{\bar{A}, \bar{A}\}=a^2, \quad \{\bar{A}, \bar{B}\}= a, \quad  \{\bar{A}, \bar{C}\}= 1,$$
 $$\{\bar{B}, \bar{A}\}=a, \quad  \{\bar{B}, \bar{B}\}= 1,\quad  \{\bar{B}, \bar{C}\}=1$$
 $$\{\bar{C}, \bar{A}\}=a^2,\quad \{\bar{C}, \bar{B}\}= 1, \quad \{\bar{C}, \bar{C}\}=1.$$
 It follows from Lemma \ref{2le} that the underlying crossed module structure on  $\dd:D_4\to {\bf Z}_1(\d)$ is completely described by $\dd$ and
 $$^Aa=a, \quad   ^Ab=ab, \quad  ^Ba=a\1, \quad ^Bb=b, \quad ^Ca=a, \quad ^Cb=b.$$
 It follows from our description of ${\bf Z}_1(\d)$ and  $\dd$  that $$\pi_i({\mathcal Z _*}(\d))\cong\begin{cases} {\sf C_2}\times {\sf C_2}, \ i=1,\\ {\sf C_2},  \ i=2.\end{cases}.$$
 It is easy to see that
 $$\{1,a^2\}\to \{1\}$$
 is the Norrie's centre of $\d:D_4\to \bar{D}_4$ and hence the inclusion $\nc(\d) \to  {\mathcal Z_*}(\d)$ is not a weak equivalence.
 
 Now we turn to the precrossed module $ {\bf Z}_1(\d) \xto{{\sf z}_0} {\bar D}_4$. Based on part i) of Lemma \ref{10} we obtain
 $$^\al A=A, \quad ^\al B=A^2B,\quad  ^\al C=C, \quad ^\beta A=A\1,\quad  ^\beta B=B, \quad ^\beta C=C.$$
 
 \section{Application in topology}\label{topology}
 \subsection{The Whitehead centre and the main theorem}
 Let ${\mathcal Z}X$  be the centre of a CW-complex $X$ as it is defined in the introduction. We also need the notion of the Whitehead centre of a pointed space $(X,x_0)$, which is denoted by $P(X,x_0)$, see \cite{gottlieb1}. By definition it is the subgroup of elements of  $\pi_1(X,x_0)$ which are central and act trivially on $\pi_k(X,x_0)$ for all $k\geq 2$.

 Recall also that the evaluation at $x_0$ defines the homomorphism $$\pi_1({\mathcal Z}X,\id_X)\to \pi_1(X,x_0),$$
 whose image  is a central subgroup of $\pi_1(X,x_0)$ known as the Gottlieb group  $G(X,x_0)$.  Gottlieb proved that $G(X,x_0)\subset P(X,x_0)$, see \cite[Theorem I.4]{gottlieb1}. 
 For general $X$, the inclusion $G(X,x_0)\subset P(X,x_0)$ is strict, and the computation of $G(X,x_0)$ is an interesting problem.
 
 The following theorem identifies the group \(G(X,x_0)\) with an explicit subgroup of \(P(X,x_0)\) for $X$ a connected CW-complex with $\pi_i(X)=0$ for all $i\geq 3$.
 
 \begin{Th} \label{Got=Wh} Let \({\G}_*=(\G_2\to \G_1)\) be a crossed module such that \(\G_1\) is free and let $X=B{\G}_*$. Then we have an exact sequence
 \[0\to G(X,x_0) \to  {\sf Z}_{\pi_2(\G_*)}(\pi_1(\G_*)) \xto{g} \H^2(\pi_1(\G_*), \pi_2(\G_*)).\]
 Here $P(X,x_0)$ coincides with \({\sf Z}_{\pi_2(\G_*)}(\pi_1(\G_*))\) and \(g\) was constructed in Proposition \ref{15'pr}.
 \end{Th} 
 The proof of Theorem \ref{Got=Wh} is given in Subsection \ref{mainT}, which is based on the technology  of crossed complexes over groupoids \cite{nat}.

\subsection{Crossed complexes over groupoids}
To define crossed complexes we first need to introduce the notion of a crossed module over a groupoid.

\begin{Rem}
	We are now following the convention of \cite{nat}, where actions are defined on the right. This does not cause any problems, as we can translate a right action $a^x$ to an equivalent left action via $^xa:= a^{x\1}$. Thus all the notions and results that we proved for left action have an equivalent formulation using right action.
\end{Rem}

A \emph{crossed module over a groupoid} 
$${\G}_*=({\G}_2 \xto{\partial} {\G}_1\rightrightarrows {\G}_0)$$ consists of a groupoid $({\G}_1\rightrightarrows {\G}_0)$, which will be denoted by $\bar{\G}_*$, together with a covariant functor ${\G}_2:\bar{\G}\to {\sf Groups}$ and  the collection of group  homomorphisms $$\d:{\G}_2(g)\to {\G}_1(g,g).$$ Here $g\in   {\G}_0$ and ${\G}_2(g)$ denotes the value of  ${\G}_2$ on $g$. Moreover, these data must satisfy the following identities:
$$ \d(a^x)=x\1\d(a)x\quad {\rm and} \quad 
a^{\d(b)}=b\1ab. $$
Here $x\in {\sf G}_1(g,h)$, $a,b\in {\G}_2(g)$ and $a^x$ as above  denotes the image of $a$ under the map ${\G}_2(x):{\G}_2(g)\to {\G}_2(h)$.

So when ${\G}_0$ is a one-element set, we recover the usual definition of a crossed module.

If $g,h\in {\G}_0$ and $x,y:x\to y$ are morphisms in $\bar{\G}_*$, then we write $x\sim y$ if $x=y\d_2(z)$ for some  $z\in {\G}_2(g)$. One easily sees that $\sim$ is a congruence and hence we can form the quotient category, which is denoted by   $\pi_1({\G}_*)$. Clearly $\text{Ob}(\pi_1({\G}_*))={\sf G}_0$. As usual with groupoids, $\pi_1({\G}_*,g)$ denotes the group of automorphisms of $g$ in $\pi_1({\G}_*)$. Moreover, $\pi_2({\G}_*,g)$ denotes the group $\ker(\d:{\G}_2(g)\to {\G}_1 (g,g))$.

The set of connected components of 
$\pi_1({\G}_*)$ and $\bar{\G}_*$ are the same, which is denoted by $\pi_0({\G}_*)$.  Varying $g$ we see that the mappings 
$g\mapsto\pi_2({\G}_*,g)$ and $g\mapsto\pi_1({\G}_*,g)$ are  functors $\pi_1(\bar{\G}_*)\to{\sf Ab}$ and $\pi_1(\bar{\G}_*)\to \sf Groups$ respectively. 

A \emph{crossed complex} is an algebraic structure that generalises the notion of crossed modules, extending them to a sequence of higher homotopical objects. Specifically, a crossed complex \( {\mathtt G}_* \) over a groupoid \( \bar{\mathtt G}_*= ({\mathtt G}_1 \rightrightarrows {\mathtt G}_0) \) is defined by a sequence 

\[
\cdots \to {\mathtt G}_4 \xto{\d} {\mathtt G}_3 \xto{\d} {\mathtt G}_2 \xto{\d} {\mathtt G}_1 \rightrightarrows {\mathtt G}_0,
\]
where \( {\mathtt G}_2 \to {\mathtt G}_1 \rightrightarrows {\mathtt G}_0\) is a crossed module over the groupoid \( \bar{\mathtt G}_* \).

For dimensions \( n \geq 3 \), the components \( {\mathtt G}_n \) are functors from \( \bar{\mathtt G}_* \) to the category of abelian groups. Each \( \d: {\mathtt G}_n \to {\mathtt G}_{n-1} \), \( n \geq 3 \) is  a morphism of functors such that  \( \d \circ \d = 0 \). The images of \( {\mathtt G}_2 \) under \( \d \) act trivially on all \( {\mathtt G}_n \) for \( n \geq 3 \), which simplifies their algebraic structure by making \( {\mathtt G}_n \) factor through a simpler groupoid \( \pi_1({\mathtt G}_*) \).

One can define morphisms between crossed complexes that respect this hierarchy, creating the category \( \mathsf{Crs} \) of crossed complexes. Important invariants associated with a crossed complex include:

\begin{itemize}[leftmargin=*]
	\item The \emph{Fundamental Groupoid} \( \pi_1({\mathtt G}_*) \), capturing path components and fundamental group-like properties.
	\item \emph{Higher Homology Groups} \( H_n({\mathtt G}_*, g) = \ker(\d_n) / \operatorname{Im}(\d_{n+1}) \), defined for \( n \geq 2 \).
\end{itemize}

If the object set \( {\mathtt G}_0 \) is a single point, the crossed complex \( {\mathtt G}_* \) is called \emph{reduced}, or a \emph{crossed complex over a group}. In this case, the complex simplifies, with \( \pi_1({\mathtt G}_*) \) becoming a single group rather than a groupoid.

\subsection{Homotopy relations of morphisms in $\sf Crs$}\label{homotcr}
Let $\al_*$ and $\beta_*$ be  morphisms between  crossed complexes ${\mathtt G}_*\to {\mathtt K}_*$. A \emph{homotopy} \cite[Definition 7.1.38, p. 218]{nat} from $\al_*$ to $\beta_*$ is  the following data:
\begin{itemize}[leftmargin=*]
\item a map $\xi_0:{\mathtt G}_0\to {\mathtt K}_1$ such that if
	$g\in {\mathtt G}_0$ then $\xi_0(g)$
	is a morphism from $\al_0(g)$ to $\beta_0(g)$ in the groupoid $\bar{\mathtt K}=({\mathtt K}_1\rightrightarrows {\mathtt K}_0)$; 
	in particular we have $$\al_0(g)=s\xi(g),$$
	where $s:{\mathtt G}_1\to {\mathtt G}_0$ is the source map;
	
	\item a function $\xi_1$ which assigns to each morphism $x:g\to h$ in ${\mathtt G}_1$ an element $\xi_1(x)\in {\mathtt K}_2(\beta_0(h))$ such that the diagram
	$$\xymatrix{\al_0(g)\ar[r]^{\al_1(x)} \ar[d]_{\xi_0(g)}
		& \al_0(h)\ar[d]^{\xi_0(h)}	\\ \beta_0(g)\ar[r]_{\beta_1(x)}& \beta_0(h)}$$
commutes up to $\xi_1(x)$, that is 
$$\al_1(x)=\xi_0(g)\1 \d \xi_1(x)\beta_1(x)\xi_0(g),$$
and one also requires that
$$\xi_1(xy)=\xi_1(x)^{\beta_1(y)} \cdot \xi_1(y)$$
where $x$ and $y$ are composable morphisms in the groupoid $\bar{\mathtt G}_*$;
\item a function $\xi_n$, $n\geq 2$ which assigns to an object $g\in {\mathtt G}_0$ a group homomorphism
$$\xi_n(g):{\mathtt G}_n(g)\to {\mathtt K}_{n+1}(\beta_0(g))$$
such that for any arrow $x:g\to h$ of the groupoid $\bar{\mathtt G}_*$ and any element $a\in {\mathtt G}_n(g), n\geq 2$ one has
$$\xi_n(a^x)=\xi_n(a)^{\beta_1(x)}.$$
Furthermore, one also has 
$$\al_n(a)=\begin{cases} \{\beta_n(a)\cdot \xi_{n-1}\d (a)\cdot \d \xi_n(a)\}^{\xi_0(g)\1}, & n=2\\ \{\beta_n(a)+ \xi_{n-1}\d (a)+\d \xi_n(a)\}^{\xi_0(g)\1},& n>2.\end{cases}
$$
\end{itemize}

Having defined a homotopy, one can talk about homotopy equivalences. As expected, any homotopy equivalence is also a weak equivalence (see \cite[Exercise 7.1.45]{nat}).


Obviously, any groupoid is a disjoint union of its connected components. This implies that any crossed complex over a groupoid 
is also a disjoint union of crossed complexes over connected groupoids. Next, it is also well-known that any connected groupoid is equivalent to a group considered as a one object category. This also has an implication for crossed complexes over groupoids. Having a crossed complex over a groupoid and an object $g$ we can form the following reduced crossed complex:
$$ \quad \cdots \to {\mathtt G}_4(g)\xto{\d_4} {\mathtt G}_3(g)\xto{\d_3} {\mathtt G}_2(g)\xto{\d_2} {\mathtt G}_1(g,g)\to \{g\}.$$
Call it the \emph{fibre over} $g$ and denote it by ${\mathtt G}_*[g]$.
Clearly, $\pi_1({\mathtt G}_*,g)=\pi_1({\mathtt G}_*[g],g)$ and $H_n({\mathtt G}_*,g)= H_n({\mathtt G}_*[g],g)$ for all $n\geq 2$. Thus for connected ${\mathtt G}_*$ the inclusion ${\mathtt G}_*[g]\to G_*$ is a weak equivalence. In fact it is even a homotopy equivalence thanks to \cite[Proposition 7.1.46, pp. 220-221 ]{nat}.

\subsection{Symmetric monoidal closed category structure}

It is an important fact that the category ${\sf Crs}$ has a symmetric monoidal closed category structure \cite[Chapter 9]{nat}, which means that for any crossed complexes ${\mathtt G}_*$ and ${\mathtt K}_*$ there is a ``functional object''
${\sf CRS}_*({\mathtt G}_*,{\mathtt K}_*)$ and a ``tensor object'' ${\mathtt G}_*\t {\mathtt H}_*$ such that
$$Hom_{\sf Crs}({\mathtt H}_*\t {\mathtt G}_*,{\mathtt K}_*)\cong Hom_{\sf Crs }({\mathtt H}_*, {\sf CRS}_*({\mathtt G}_*,{\mathtt K}_*)).$$

We will only use  ${\sf CRS}_*({\mathtt G}_*,{\mathtt K}_*)$. Therefore we recall the corresponding construction following \cite[p. 281]{nat}.

In dimension $0$, ${\sf CRS}_0({\mathtt G}_*,{\mathtt K}_*)$  is the set of crossed complex morphisms from ${\mathtt G}_*$ to ${\mathtt K}_*$. Elements in ${\sf CRS}_1({\mathtt G}_*, {\mathtt K}_*)$ are homotopies. More explicitly, if $\al_*$ and $\beta_*$ are two such morphisms considered as two objects in the crossed complex ${\sf CRS}_*({\mathtt G}_*,{\mathtt K}_*)$,
then the morphisms from $\al_*$ to $\beta_*$ are homotopies $\xi_*$ from $\al_*$ to $\beta_*$ as defined in Section \ref{homotcr}.

One can show that in this way one obtains a groupoid. Next, elements of degree $k$ in ${\sf CRS}_*({\mathtt G}_*, {\mathtt K}_*)$ are $k$-homotopies \cite[Definitions 9.3.3 p. 281]{nat}. Recall that if
$\al_*:{\mathtt G}_*\to {\mathtt K}_*$ is a morphism of crossed complexes as above, then a \emph{$k$-fold homotopy}, $k\geq 2$, from ${\mathtt G}_*$ to ${\mathtt K}_*$ over $\al_*$ is a collection of maps $\xi_n:{\mathtt G}_n\to {\mathtt H}_{n+k}$, $n\geq 0$, satisfying the following conditions:
\begin{itemize}[leftmargin=*]
	\item for $n\geq 2$, $a\in {\mathtt G}_n(g)$ and $x:g\to h$, one has $$\xi_n(a^x)=\xi_n(a)^{\al_1(x)},$$
	
	\item if $n\geq 2$, then $\xi_n$ is additive,
	
	\item for $n=1$, the map $\xi_1$ satisfies the relation
	$$\xi_1(xy)=\xi_1(x)^{\beta_1(y)} \times \xi_1(y).$$
\end{itemize}

Thus, for each morphism  $\al_*: {\mathtt G}_*\to {\mathtt K}_*$ considered as an object of the groupoid ${\sf CRS}({\mathtt G}_*,{\mathtt K}_*)$, one denotes by ${\sf CRS}_k({\mathtt G}_*,{\mathtt K}_*)(\al_*)$ the collection of all $k$-fold homotopies over $\al_*$. Then the assignment
$$\al_*\mapsto  {\sf CRS}_k({\mathtt G}_*,{\mathtt K}_*)(\al_*)$$ 
is a part of the crossed complex ${\sf CRS}_*({\mathtt G}_*,{\mathtt K}_*)$, see details in \cite[Definition 9.3.5, p. 282]{nat}.

We will need the following fact, which is a straightforward consequence of the definition.

Let $n\geq 2$ and ${\mathtt G}_*$ a crossed complex over a groupoid. We write $$\ell({\mathtt G}_*)\leq n$$ 
if ${\mathtt G}_k=0$ for all $k>n$. If ${\mathtt G}_*$ is a crossed complex over a groupoid, we let $\tau_n{\mathtt G}_*$ be the following quotient of ${\mathtt G}_*$:
$$\tau_m{\mathtt G}_*=\begin{cases} {\mathtt G}_m, & m<n\\ {\mathtt G}_n/\im(\d),& m=n\\ 0&m>n
\end{cases}
$$
\begin{Le}\label{crstau} Let ${\mathtt G}_*$ and ${\mathtt K}_*$ be crossed complexes over groupoids. Assume $\ell({\mathtt K}_*)\leq n$, where $n\geq 2$. Then
	$${\sf CRS}_k({\mathtt G}_*,{\mathtt K}_*)={\sf CRS}_k(\tau_n{\mathtt G}_*,{\mathtt K}_*).$$
\end{Le}

The situation when both ${\G}_*$ and ${\K}_*$ are crossed modules is especially transparent. In this case we have ${\G}_0=\{1\}={\K}_0$ and ${\G}_n=0={\K}_n$ for all $n\geq 3$. Hence  ${\sf CRS}_*({\G}_*,{\K}_*)$ has the form
$$\cdots \to 0\to {\sf CRS}_2({\G}_*,{\K}_*)\to {\sf CRS}_1({\G}_*,{\K}_*)\rightrightarrows {\sf CRS}_0({\G}_*,{\K}_*)$$
where $${\sf CRS}_0({\G}_*,{\K}_*)=Hom_{\sf Crs}({\G}_*, {\K}_*).$$
Therefore objects are given by $\al_*=(\al_2,\al_1)$, where  $\al_{2}:{\G}_{2}\to {\K}_{2}$ and $\al_1:{\G}_{1}\to {\K}_{1}$ are group homomorphisms such that the diagram 
$$\xymatrix{\cdots\ar[r] &0\ar[r]\ar[d]_\id  &{\G}_{2}\ar[r]^\d\ar[d]_{\al_{2}}& {\G}_{1}  \ar[d]^{\al_1}\ar[r] & \{1\}\ar[d]^\id\\ \cdots\ar[r]  &0\ar[r]  &{\K}_{2}\ar[r]_{\d} &{\K}_{1}\ar[r] & \{1\}} $$
commutes and 
$$\al_{2}(^xa )=\,  ^{\al_{1} (x)}\al_{1}(a).
$$
If $\al=(\al_1,\al_2)$ and $\beta=(\beta_2,\beta_1)$ are two morphisms ${\G}_*\to {\K}_*$, then a homotopy $\al \Longrightarrow \beta$ is a pair $\xi_*=(\xi_0,\xi_1)$, where $\xi_0:{\G}_0\to {\K}_1$ and $\xi_1:{\G}_1\to {\K}_2$ are maps satisfying the following properties (since ${\G}_0=\{1\}$ we will assume that $\xi_0\in {\K}_1$).
$$\xi_1(xy)=\xi_1(x) ^{\beta_1(x)}\, \xi_1(y),$$
$$\al_1(x)=\xi_0\1\d(\xi_1(x))\beta_1(x)\xi_0.$$
$$\al_2(a)=\, (\beta_2(a)\xi_1(\d a))^{\xi_0(g)\1}.$$

Here $x,y\in {\G}_1$ and $a\in {\G}_2$. Thus ${\sf CRS}_1({\G}_*,\K_*)(\al,\beta)$ is the set of all pairs $(\xi_0,\xi_1)$ satisfying the above three conditions. Finally, ${\sf CRS}_2({\G}_*,{\K}_*)(\al)$ is the set of all maps $\tau:{\G}_0\to {\K}_2$. Since ${\G}_0=\{1 \}$, we see that ${\sf CRS}_2({\G}_*,{\K}_*)={\G}_2$. 

\begin{Le} Let ${\G}_*$ be a crossed module. Then the fibre of ${\sf CRS}_*({\G}_*,{\G}_*)$ at $id=(\id_{{\G}_2},\id_{{\G}_1})$ is equivalent to ${\mathcal Z}_*({\G}_*)$.
\end{Le}
\begin{proof} If $\al=\beta=\id$, then we see that the fibre in dimension 1 consists of pairs $(h_0,h_1)$, where $h_0\in \G_1$. These pairs must satisfy the conditions
	\begin{align} 
		h_1(xy)&=h_1(x) ^y\cdot h_1(y),\\
		x&=h_0\1 \d (h_1(x)) xh_0,\\
		a&=\, (a h_1(\d a))^{(h_0\1)}.
	\end{align}
	One easily sees that the first equality simply says that $h_1:\G_1\to \G_2$ is a crossed homomorphism, the second one says that $\d(h_1(x))=h_0xh_0\1x\1=[h_0,x]$, while the third one says $h_1(\d a)=a\1 a^{h_0}$ and the result follows.
	
\end{proof}

\subsection{Relation to mapping spaces}
The category ${\sf Crs}$ of crossed complexes   and the category ${\sf CW}$ of CW-complexes and cellular maps are related by the pair of  functors $$\Pi_*:{\sf CW}\to {\sf Crs}\quad  {\rm and} \quad  B:{\sf Crs}\to {\sf CW},$$ such that on homotopy classes of maps one has a binatural bijection:
$$[X,B\G_*]\cong [\Pi_* X, \G_* ].$$
Actually more is true, see \cite[Theorem 11.4.19. p.378]{nat}. 

\begin{Th}\label{BHTHA}
	Let $X$ be a $CW$-complex and let
	$\G_*$ be a crossed complex.
	Then there is a natural weak homotopy equivalence
	$$ B({\sf CRS}_*(\Pi_* X,\G_*)) \to Maps(X, B\G_*). $$
\end{Th}

The proof of Theorem \ref{Got=Wh} is based on the following result, which is of independent interest. 

\begin{Pro}\label{709.pr2} If $\G_*=(\G_2\xto{\d} \G_1)$ is a crossed module 
	such that $\G_1$ is a free  group, 
	then $B{\mathcal Z}\G_*$ and ${\mathcal Z}B\G_*$ are homotopy equivalent.
\end{Pro}

\begin{proof}

Take $X=B\G_*$ in Theorem \ref{BHTHA} to obtain a weak equivalence 
$$B({\sf CRS}_*(\Pi_* B\G_*,\G_*))\to Maps(X, B\G_*).$$
By Lemma \ref{crstau} we have that the first space can be replaced by
$B({\sf CRS}_*(\tau_2\Pi_* B\G_*,\G_*))$. It is well-known that  $\Pi_*B\G_*\to  \G_*$ is a weak equivalence (see for example, \cite[p.100]{BH2}). It follows that $\tau_2(\Pi_*B\G_*)\to \G_*$ is also a weak equivalence. Observe that $\tau_2(\Pi_*B\G_*)$ is a crossed module with free group in dimension one, same for $\G_*$. It follows that these crossed modules are actually weak equivalent and cofibrant object in an appropriate model category structure, see \cite[Corollary 2.13] {miranda}. Hence they are homotopy equivalent and as a consequence $B({\sf CRS}_*(\tau_2\Pi_* B\G_*,\G_*))$ and $B({\sf CRS}_*(\G_*,\G_*))$ are homotopy equivalent. Hence we obtain the weak equivalences
$$B({\sf CRS}_*(\G_*,\G_*))\to B({\sf CRS}_*(\Pi_* B\G_*,\G_*))\to Maps(X, B\G_*).$$

Looking at the component containing the identity map, we obtain a homotopy equivalence $$ B{\mathcal Z}_*\G_*\to {\mathcal Z}B\G_*,$$
proving the proposition.
\end{proof}
\subsection{Proof of Theorem \ref{Got=Wh}}
\label{mainT}
Let \(\G_*=(\G_2\xto{\d} \G_1)\) be a crossed module 
such that $\G_1$ is a free group. Consider the evaluation map
\[{\mathcal Z}X=B{\mathcal Z}_*\G_*\to B\G_* = X\]
whose algebraic model is the crossed module morphism \((id_{\G_2}, {\sf z}_1):{\mathcal Z}_*(\G_*)\to \G_*\). Hence, the homomorphism between the homotopy groups
\[\xymatrix{ \pi_1({\mathcal Z}X,\id_X)\ar[r]\ar[d]_{id}& \pi_1(X,x_0)\ar[d]^{id} \\
	\pi_1({\mathcal Z}_*(\G_*))\ar[r]& \pi_1(\G_*)
}\]
is induced by \( {\sf z}_1:(x,\xi) \mapsto x\) and thus \(\pi_1({\mathcal Z}X,\id_X)\to \pi_1(X,x_0)\) factors through the map \(\omega\) from Proposition \ref{15pr}. Hence,
\begin{align*}
G(X,x_0) & = \im (\pi_1({\mathcal Z}X,\id_X)\to \pi_1(X,x_0) )\\
& = \im (\pi_1({\mathcal Z}_*(\G_*))\xto{\omega}  {\sf Z}_{\pi_2(\G_*)}(\pi_1(\G_*)))\\
& = \ker (g),
\end{align*}
where \(g\) is the homomorphism defined in Proposition \ref{15'pr}.


\section*{Acknowledgements}
This research was partially supported by the grant of the Shota Rustaveli Georgian National Science Foundation FR-22-199.

\end{document}